\documentclass[11pt, a4paper]{amsart}
\usepackage{amsmath}
\usepackage{amsfonts}
\usepackage{graphicx}
\usepackage{framed}
\usepackage{amssymb}
\usepackage{esvect}
\usepackage{paralist}

\usepackage{etex}
\usepackage{latexsym}
\usepackage[english]{babel}
\usepackage[utf8x]{inputenc}


\theoremstyle{plain}
\newtheorem{theorem}{Theorem}[section]
\newtheorem{lemma}[theorem]{Lemma}
\newtheorem{proposition}[theorem]{Proposition}
\theoremstyle{definition}
\newtheorem{definition}[theorem]{Definition}
\newtheorem{example}[theorem]{Example}
\newtheorem{remark}[theorem]{Remark}

\numberwithin{equation}{section}

\newcommand{\R}{\mathbb{R}}
\newcommand{\N}{\mathbb{N}}

\newcommand{\de}{\partial}

\renewcommand{\AA}{\mathcal{A}}

\newcommand{\bldu}{\mathbf{u}}

\newcommand{\bldz}{\mathbf{z}}
\newcommand{\GG}{\mathcal{G}}
\renewcommand{\d}{\mathrm{d}}

\newcommand{\PP}{\mathcal{P}}
\newcommand{\BB}{\mathcal{B}}

\newcommand{\f}{\mathfrak{f}}

\usepackage{color}

\textwidth = 13.5cm

\begin{document}

\title[Positive solutions of fractional Laplacian systems]{Nonzero positive solutions of fractional Laplacian systems with functional terms}

\date{\today}

\author[S. Biagi]{Stefano Biagi}
\address{Stefano Biagi, Dipartimento di Matematica,
Politecnico di Milano, Via Bonardi~9, 20133 Milano, Italy}%
\email{stefano.biagi@polimi.it}%

\author[A. Calamai]{Alessandro Calamai}
\address{Alessandro Calamai, Dipartimento di Ingegneria Civile, Edile e Ar\-chi\-tet\-tu\-ra,
Universit\`a Politecnica delle Marche, Via Brecce Bianche, 60131 Ancona, Italy}%
\email{calamai@dipmat.univpm.it}%

\author[G. Infante]{Gennaro Infante}
\address{Gennaro Infante, Dipartimento di Matematica e Informatica, Universit\`{a} del\-la
Calabria, 87036 Arcavacata di Rende, Cosenza, Italy}%
\email{gennaro.infante@unical.it}%

\begin{abstract} 
We study the existence of non-zero positive solutions 
of  a class of systems of differential equations driven by fractional powers of the Laplacian.
Our approach is based on the notion of fixed point index, 
and allows us to deal with non-local functional weights and functional boundary conditions. We present two examples to shed light on the type of functionals and growth conditions that can be considered with our approach.

\end{abstract}

\subjclass[2010]{Primary 35R11; Secondary 35B09, 35A16, 47H10.}

\keywords{Positive Solution, Fractional Laplacian, Non-local Functional Weights,
Functional Boundary Condition, Cone, Fixed Point Index}

\maketitle

\section{Introduction and preliminaries}
The main aim of the present paper is to establish some
existence and non-existence results for
`functional' Dirichlet problems driven by fractional
powers of the classical Laplace operator.
More precisely, if $m\geq 1$ is a fixed
natural num\-ber, we shall be concerned with Dirichlet problems
of the following form
\begin{equation} \label{eq.PBIntro}
 \begin{cases}
  (-\Delta)^{s_i}u_i = \lambda_i\,f_i(x,\mathbf{u},\PP_i[\mathbf{u}]) & \text{in $\Omega$
  \qquad\qquad ($i = 1,\ldots,m$)}, \\
  u_i \equiv \eta_i\,\zeta_i(x)\,\BB_i[\mathbf{u}],
  & \text{in $\R^n\setminus\Omega$ \qquad ($i = 1,\ldots,m$)}, \\
  \mathbf{u}\gneqq 0 & \text{in $\R^n$},
 \end{cases}
\end{equation}
where $\Omega\subseteq\R^n$ is a fixed open set,
$\mathbf{u} = (u_1,\ldots,u_m)$
and, for $i = 1,\ldots,m$, 
\begin{itemize}
 \item $f_i$ is a real-valued function defined on $\Omega\times\R^m\times\R$;
 \item $\lambda_i,\,\eta_i$ are non-negative parameters;
 \item $\zeta_i$ is a sufficiently regular, real-valued function defined on $\R^n$;
 \item $\PP_i,\,\BB_i$ are suitable functionals to be defined later.
\end{itemize} 
Moreover, $s_1,\ldots,s_m\in (0,1)$ and
$(-\Delta)^{s_i}$ denotes the standard fractional Laplace operator
of order $s_i$, which is the non-local operator defined as
$$(-\Delta)^{s_i}v(x) = c_{n,s_i}\cdot
\mathrm{P.V.}\int_{\R^n}\frac{v(x)-v(y)}{|x-y|^{n+2{s_i}}}\,\d y.$$
Here, $c_{n,s_i} > 0$ is the `normalization' constant defined as
$$c_{n,s_i} := \bigg(\int_{\R^n}\frac{1-\cos(y_1)}{|y|^{n+2s_i}}\,\d y\bigg)^{-1}.$$ 
Notice that, in addition to the fractional differential operators, in system \eqref{eq.PBIntro} 
other non-local terms occur, both in the differential  equations (having the role of \emph{non-local 
functional weights}) and in the boundary conditions (BCs for short).
In particular, we are interested in the existence/non-existence of 
\emph{positive} solutions of 
\eqref{eq.PBIntro}, and our approach is based on the classical notion of fixed point index in cones.
We work in the Banach space of the bounded continuous
$\R^m$-valued functions defined in $\R^n$, namely 
$$
    \mathbb{X}  := \left\{u\in C(\R^n;\R^m):\,\sup_{\R^n}|u_i|<\infty\,\,\text{
   for all $i = 1,\ldots,m$}\right\},
$$
endowed with the supremum norm; accordingly, since we are interested
in non-zero positive solutions,
we look for solutions of \eqref{eq.PBIntro} lying in the cone
$$
   P := \left\{\mathbf{u}\in \mathbb{X}:\,\text{$u_i\geq 0$ on $\R^n$ for every $i = 1,\ldots,m$}\right\}.
$$
In view of these facts, 
it is natural to assume that that the real-valued operators $\PP_i,\,\BB_i$ are defined on $\mathbb{X}$
(for all $i = 1,\ldots,m$). \medskip

As it is by now well-known, equations involving the fractional Laplacian arise in several applications; because of this, they have been extensively studied in the last decades by many authors, we provide as a reference the comprehensive survey~\cite{DNPV}.
Among others, let us mention here the equations, driven by the fractional Laplacian,
which have additional non-local terms and are often
referred to as \emph{Kirchhoff-type equations}.
For instance, Kirchhoff-type equations on bounded domains have been recently studied  in
\cite{AmbSer19,Chen15, FV14, SunTeng14},
while \emph{systems}  on bounded domains are investigated, e.g., in \cite{CM2020}.
Moreover, Kirchhoff-type equations on the whole of $\R^n$ have been studied in
\cite{AlMi16, Amb19, AmbIs18,Secchi}.
As regards \emph{concrete} `real-word' applications,
the kind of problems that we are able to deal with seems to be of interest in, e.g.,
biological models: indeed, on one hand, equations with 
functional terms in the right-hand side commonly appear in models about cell-adhesion (see, e.g., \cite{HB,LK}); on the other hand, 
the fractional Laplacian is also used to model superdiffusive cells (see, e.g., \cite{ERGPS, PST}).
 
In the above cited papers, variational methods are frequently used to prove
the existence/multiplicity of solutions.
To the best of our knowledge, 
not many papers have been devoted to equations driven by the  fractional 
Laplacian from the point of view of topological methods.
Let us mention here, for instance, the recent papers by 
 Alves,  de Lima and N\'obrega  \cite{Alves2018, Alves2020}:
in these papers, the authors obtained Rabinowitz-type global bifurcation 
results of positive solutions of a parametric fractional 
Laplacian e\-qua\-tion in $\R^n$ using the Leray-Schauder degree.
On the other hand, due to the presence of the non-local functional weights, system \eqref{eq.PBIntro} 
can be viewed as a \emph{nonlinear fractional Kirchhoff-type problem},
even if in a slightly different direction than the one proposed in \cite{CM2020, FV14}.
\medskip

As already pointed out, in this paper we adopt a topological approach based on the classical notion of  fixed point index (see e.g.\,\cite{guolak}) to prove our main existence result, namely Theorem \ref{thm.mainExistence} below; moreover, we prove a non-existence result via an elementary argument.
In some sense, our existence result stems from a pioneering work by Amman \cite{Amann-rev,AmCra} 
and follows a line recently pursued by the authors in the study 
of elliptic PDEs \cite{BiCaInf1, gi-tmna, gi-jepe, gi-BK}.
We point out that our approach permits to consider (possibly nonlinear) functional  BCs:
for example, in Section \ref{sec.examples} we will discuss the solvability of the following problem:
   \begin{equation} \label{eq.PBintro}
    \begin{cases}
    (-\Delta)^{\frac{1}{4}}u_1 = \lambda_1(1-u_1)\,\int_{B_1}e^{u_2}\,\d x & \text{in $B_1$}, \\
    (-\Delta)^{\frac{3}{4}}u_2 = \lambda_2 u_2\cdot\mathrm{osc}_{B_1}(u_1) & \text{in $B_1$}, \\
    u_1\big|_{\R^2\setminus B_1} = \eta_1\cdot u_1(0)u_2(0), \\
    u_2\big|_{\R^2\setminus B_1} = \eta_2\cdot\limsup\limits_{|x|\to\infty}u_1(x),
    \end{cases}
   \end{equation}
in which by $B_1$ we denote the Euclidean ball in $\R^2$ centered at $0$ with radius $1$, and 
by $\mathrm{osc}_{B_1}(\phi)$ we mean he oscillation of the the function $\phi$ on $B_1$.
\medskip

We now briefly describe the structure of our paper. 
In the first part, we per\-form a preliminary study 
of the fractional differential operators which occur in \eqref{eq.PBIntro}:
in Section \ref{sec.preliminaries} we collect some properties and estimates of the solutions of the Dirichlet problem for $(-\Delta)^s$, $s\in (0,1)$, which allow to define a
\emph{Green operator}, denoted by $\mathcal{G}_s$, from $L^\infty(\Omega)$ to $C^{\,0,s}(\R^n)$. These properties are probably known to the experts in the field, nevertheless we include them for the sake of completeness.
We refer the interested reader to the already quoted survey \cite{DNPV}
for a detailed and self-contained introduction to the  fractional Laplacian.
We also discuss the positivity and compactness of the Green operator $\mathcal{G}_s$, thought of as an operator from $L^\infty(\Omega)$ into itself, as well as spectral properties of $\mathcal{G}_s$.
 Roughly speaking, these estimates yield the \emph{a priori} bounds needed to compute the fixed point index in suitable cones of non-negative functions.
We  point out that a challenging feature of our investigation is the choice of the appropriate functional spaces to which the solutions belong. This is discussed in detail in Section~\ref{sec.preliminaries}.
In Section~\ref{sec.existence} we prove our main results,
while the last Section \ref{sec.examples}
contains a 
couple of examples illustrating both our existence and non-existence result.  

\section{Preliminaries and auxiliary results} \label{sec.preliminaries}
 In order to keep the paper as self-contained as possible,
 we collect in this section some definitions and results
 which shall be exploited in the sequel.
\subsection{The $(-\Delta)^s$-Green operator} \label{subsec.Deltasgreen}
 Here we introduce the so-called $(-\Delta)^s$-Gre\-en operator
 and we establish some of its basic properties. 
 Throughout what follows, we take for fixed
 all the notation listed below.
 \begin{itemize}
  \item $\Omega\subseteq\R^n$ is a (non-void) open set
  with smooth boundary and $s\in (0,1)$;
  \item $H^s(\R^n)$ is the usual fractional
  Sobolev space of order $s$, i.e.,
  $$H^s(\R^n) := \Big\{u\in L^2(\R^n):\,\iint_{\R^{2n}}\frac{|u(x)-u(y)|^2}{|x-y|^{n+2s}}\,\d x\,\,d y
  <\infty\Big\};$$
  \item If $U\subseteq\R^n$ is any open set and $\alpha\in (0,1)$
  is fixed, $C^{\,0,\alpha}(\overline{U})$ is the set of the fun\-ctions
  $u:\overline{U}\to\R$ which are H\"older-continuous up to $\overline{U}$, i.e.,
  $$C^{\,0,\alpha}(\overline{U}) := \bigg\{u\in C(\overline{U}):\,
 [u]_{\alpha,U} := \sup_{x\neq y\in U}\frac{|u(x)-u(y)|}{|x-y|^\alpha}<\infty\bigg\}.$$
 In particular, if $u\in C^{\,0,\alpha}(\overline{U})$, we set
 $$\|u\|_{C^{\,0,\alpha}(\overline{U})} := \sup_{U}|u|+[u]_{\alpha,U}.$$
 \item $C_b(\R^n)$ is the Banach
 space of the continuous functions on $\R^n$ which are \emph{globally bounded
 on $\R^n$}, i.e.,
 $C_b(\R^n) = C(\R^n)\cap L^\infty(\R^n)$.
 \end{itemize}
 In order to introduce the $(-\Delta)^s$-Green operator, our starting point
 is the following no\-ta\-ble result due to Ros-Oton and Serra \cite{RosSerra}.
 \begin{theorem}\protect{\cite[Prop.\,1.1]{RosSerra}} \label{thm.RosSerra}
 Let
 $f\in L^\infty(\Omega)$ be fixed. Then, there exists
 a u\-ni\-que \emph{(}weak\emph{)} solution $u_f\in H^s(\R^n)\cap C^{\,0,s}(\R^n)$ of
 the Dirichlet problem
 \begin{equation} \label{eq.pbDeltasHom}
  \begin{cases}
  (-\Delta)^s u = f & \text{in $\Omega$}, \\
  u \equiv 0 & \text{in $\R^n\setminus\Omega$}.
  \end{cases}
 \end{equation}
 This means, precisely, that $u_f\equiv 0$ pointwise in $\R^n\setminus\Omega$ and
 $$\frac{c_{n,s}}{2}\iint_{\R^{2n}}
 \frac{(u_f(x)-u_f(y))(\varphi(x)-\varphi(y))}{|x-y|^{n+2s}}\,\d x\,\d y
 = \int_{\Omega}f\varphi\,\d x\quad
 \forall\,\,\varphi\in C_0^\infty(\Omega).$$
 Furthermore, there exists a constant $C = C(\Omega,s) > 0$ such that
 \begin{equation} \label{eq.estimRosSerra}
  \|{u}_f\|_{C^{\,0,s}(\R^n)}\leq C\,\|f\|_{L^\infty(\Omega)}.
  \end{equation}
 \end{theorem}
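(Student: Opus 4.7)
The plan is to split the proof into three parts: existence, uniqueness, and the Hölder regularity with the claimed estimate. The first two are rather soft and variational in nature, while the Hölder bound is where the real work lies.

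For existence and uniqueness, I would work in the closed subspace
$$H^s_0(\Omega) := \bigl\{u\in H^s(\R^n):\,u\equiv 0\text{ a.e.\ in }\R^n\setminus\Omega\bigr\}\subseteq H^s(\R^n),$$
which, since $\Omega$ is bounded and smooth, carries the norm induced by the Gagliardo semi-norm (this is the fractional Poincaré inequality). On this Hilbert space the symmetric bilinear form
$$a(u,\varphi) := \frac{c_{n,s}}{2}\iint_{\R^{2n}}\frac{(u(x)-u(y))(\varphi(x)-\varphi(y))}{|x-y|^{n+2s}}\,\d x\,\d y$$
is continuous and coercive, while $\varphi\mapsto\int_\Omega f\varphi\,\d x$ is continuous on $H^s_0(\Omega)$ because $f\in L^\infty(\Omega)\subseteq L^2(\Omega)$ and $H^s_0(\Omega)\hookrightarrow L^2(\Omega)$. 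Lax--Milgram then delivers a unique $u_f\in H^s_0(\Omega)$ satisfying the weak formulation in the statement. Uniqueness automatically gives $u_f\equiv 0$ pointwise in $\R^n\setminus\Omega$ (by the very definition of $H^s_0(\Omega)$, up to a zero-measure modification which will be absorbed by the continuity proved below).

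For the Hölder bound, I would argue in two steps. First, interior regularity: using the standard representation of $(-\Delta)^s$-harmonic extensions and the fact that $f\in L^\infty$, one obtains a local $C^{\,0,\alpha}$-bound on every compact subset of $\Omega$ for some $\alpha\in(0,1)$, with constant depending only on $\|f\|_{L^\infty(\Omega)}$ and the distance to $\partial\Omega$; this can be derived, for instance, from the Caffarelli--Silvestre extension together with interior Schauder estimates, or directly by Silvestre's interior Hölder estimate for the fractional Laplacian. Second, and this is the real obstacle, boundary regularity with the sharp exponent $s$: since $\Omega$ has smooth boundary, it satisfies a uniform exterior ball condition, and I would construct a boundary barrier modeled on the explicit solution $w(x) := (1-|x|^2)^s_+$ of $(-\Delta)^s w = \kappa_{n,s}$ in the unit ball (with $w\equiv 0$ outside). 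By translating, rescaling and comparing via the weak maximum principle for $(-\Delta)^s$, one controls $|u_f(x)|$ by $C\,\|f\|_{L^\infty}\,\mathrm{dist}(x,\partial\Omega)^s$ near the boundary; combining this with the interior estimate and a standard covering/interpolation argument yields the global bound
$$\|u_f\|_{C^{\,0,s}(\R^n)}\leq C(\Omega,s)\,\|f\|_{L^\infty(\Omega)}.$$

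The hard part is precisely the boundary Hölder estimate with the optimal exponent $s$: unlike the classical Laplacian case, the solution is generically only $C^{\,0,s}$ up to $\partial\Omega$, no better, and capturing this sharp behavior requires the explicit barrier $w$ above together with a careful use of the exterior ball condition. The variational part and the interior regularity are essentially standard; the boundary analysis, which is the core of the Ros-Oton--Serra result, is what I would expect to consume most of the work.
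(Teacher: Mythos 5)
The paper does not give its own proof of this statement: it is imported verbatim from Ros-Oton and Serra (\cite[Prop.\,1.1]{RosSerra}), and the theorem header makes this explicit. Your sketch is, in fact, a faithful outline of the argument in that reference: Lax--Milgram in $H^s_0(\Omega)$ for existence and uniqueness, interior H\"older estimates, and a boundary barrier built from the explicit torsion function $(r^2-\|x\|^2)^s_+$ of the ball (this is precisely Lemma~\ref{lem.Solutiondir} of the present paper) combined with the exterior ball condition to obtain the decay $|u_f(x)|\leq C\|f\|_{L^\infty}\,\mathrm{dist}(x,\partial\Omega)^s$.

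One place where your sketch is looser than the actual proof: saying one has interior $C^{\,0,\alpha}$ bounds ``for some $\alpha\in(0,1)$'' is not quite enough to close the argument. To pass from the boundary decay to the global $C^{\,0,s}$ seminorm one needs, on balls $B_{d(x)/2}(x)$, a scale-invariant interior estimate at exponent at least $s$, and one must be careful because the naive rescaling produces a factor $d(x)^{-s}\|u\|_{L^\infty(\R^n)}$ that blows up near $\partial\Omega$. Ros-Oton and Serra handle this by proving an interior estimate whose right-hand side involves a weighted $L^1$-tail of $u$ together with $\|u\|_{L^\infty(B_{2r})}$ (which is $O(r^s)$ by the barrier), so the bad factor cancels after rescaling. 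This bookkeeping, rather than the barrier alone, is where a genuine extra idea is required; the rest of your outline matches the reference.
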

 Thanks to Theorem \ref{thm.RosSerra},
 the following definition is well-posed.
 \begin{definition} \label{def.DeltasGreen}
 We define the \emph{$(-\Delta)^s$-Green operator}
 (relative to $\Omega$) as
 \begin{equation} \label{eq.defGreens}
  \mathcal{G}_s: L^\infty(\Omega)\to C^{\,0,s}(\R^n),\qquad
 \mathcal{G}_s(f) := {u}_f,
 \end{equation}
 where $u_f$ is the unique solution of \eqref{eq.pbDeltasHom} (according
 to Theorem \ref{thm.RosSerra}).
 \end{definition}
 With Definition \ref{def.DeltasGreen} at hand, we now proceed
 by proving some `topological' pro\-per\-ties of $\GG_s$.
 We begin with a couple of \emph{continuity/compactness} results.
 \begin{proposition} \label{prop.GGscont}
   The operator $\mathcal{G}_s$ is continuous from
   $L^\infty(\Omega)$ to $C^{\,0,s}(\R^n)$.
  \end{proposition}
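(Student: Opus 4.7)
The plan is to reduce the claim to the quantitative estimate \eqref{eq.estimRosSerra} already provided by Theorem \ref{thm.RosSerra}, once one observes that $\mathcal{G}_s$ is a linear operator. Indeed, for a linear operator between normed spaces, continuity is equivalent to boundedness, and the bound is given for free by Ros-Oton and Serra's result.

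First I would verify the linearity of $\mathcal{G}_s$. Given $f,g\in L^\infty(\Omega)$ and $\alpha,\beta\in\R$, set $w := \alpha\,\mathcal{G}_s(f) + \beta\,\mathcal{G}_s(g)$. Since the fractional Laplacian is linear and the zero exterior condition is preserved by linear combinations, $w$ is a weak solution of
\begin{equation*}
\begin{cases}
(-\Delta)^s w = \alpha f + \beta g & \text{in $\Omega$}, \\
w \equiv 0 & \text{in $\R^n\setminus\Omega$}.
\end{cases}
\end{equation*}
Since $\alpha f + \beta g\in L^\infty(\Omega)$, the uniqueness part of Theorem \ref{thm.RosSerra} forces $w = \mathcal{G}_s(\alpha f + \beta g)$, proving linearity.

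Next I would invoke \eqref{eq.estimRosSerra} directly: for every $f\in L^\infty(\Omega)$,
\begin{equation*}
\|\mathcal{G}_s(f)\|_{C^{\,0,s}(\R^n)} = \|u_f\|_{C^{\,0,s}(\R^n)} \leq C\,\|f\|_{L^\infty(\Omega)},
\end{equation*}
with $C = C(\Omega,s) > 0$ independent of $f$. Combined with linearity, this shows that $\mathcal{G}_s$ is a bounded linear operator, hence continuous.

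There is essentially no obstacle here: all the analytical work has been absorbed into Theorem \ref{thm.RosSerra}. The only minor point worth stating explicitly in the write-up is the appeal to uniqueness in order to identify the linear combination of solutions with the solution of the combined source; after that, the proof is a one-line application of the standard equivalence between boundedness and continuity for linear maps.
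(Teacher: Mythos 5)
Your proposal takes exactly the same route as the paper: invoke estimate \eqref{eq.estimRosSerra} and combine it with the linearity of $\mathcal{G}_s$ (which the paper dismisses as ``obvious'' while you spell out the uniqueness argument). The argument is correct and matches the paper's proof.
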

  \begin{proof}
   On account of \eqref{eq.estimRosSerra}, for every $f\in L^\infty(\Omega)$ we have
   $$\|\mathcal{G}_s(f)\|_{C^{\,0,s}(\R^n)}
   = \|{u}_f\|_{C^{\,0,s}(\R^n)}\leq C\,\|f\|_{L^\infty(\Omega)},$$
   where $C > 0$ is a constant only depending on $\Omega$ and $s$. From this, since
   $\mathcal{G}_s$ is obviously linear, we immediately infer that
   $\mathcal{G}_s$ is continuous.
  \end{proof}
  \begin{proposition} \label{prop.GGscompact}
  Let $\{f_k\}_{k = 1}^\infty$ be a \emph{bounded} sequence
  in $L^\infty(\Omega)$. Then, there exists
  $v_0\in C(\R^n)$ such that
  $v_0\equiv 0$ on $\R^n\setminus\Omega$ and
  \emph{(}up to a sub-sequence\emph{)}
  \begin{equation} \label{eq.limGGssubseq}
   \lim_{k\to\infty}\mathcal{G}_s(f_k) = v_0 \qquad\text{uniformly in $\R^n$}.
  \end{equation}
In particular, $\mathcal{G}_s$ is compact from $L^\infty(\Omega)$ into 
  $L^\infty(\R^n)$.
  \end{proposition}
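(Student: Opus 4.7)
The plan is to apply the Ros--Oton--Serra estimate in tandem with the Arzelà--Ascoli theorem, exploiting the fact that the functions $\mathcal{G}_s(f_k)$ all vanish outside $\Omega$ in order to upgrade local uniform convergence to uniform convergence on the whole of $\R^n$.

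First, since $\{f_k\}$ is bounded in $L^\infty(\Omega)$, estimate \eqref{eq.estimRosSerra} yields a constant $M>0$, independent of $k$, such that
\begin{equation*}
\sup_{\R^n}|\mathcal{G}_s(f_k)| + [\mathcal{G}_s(f_k)]_{s,\R^n} \leq M \qquad\text{for all }k\in\N.
\end{equation*}
Thus $\{\mathcal{G}_s(f_k)\}_{k}$ is uniformly bounded and uniformly $s$-Hölder continuous on $\R^n$; in particular, it is equicontinuous. Fixing any ball $\overline{B_R}\supseteq\overline{\Omega}$ (here I use that $\Omega$ is bounded, as required by Theorem \ref{thm.RosSerra}), the classical Arzelà--Ascoli theorem applied on $\overline{B_R}$ provides a subsequence, still denoted by $\{\mathcal{G}_s(f_k)\}_k$, converging uniformly on $\overline{B_R}$ to some $v_0 \in C(\overline{B_R})$.

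Next, I would exploit the boundary condition: since $\mathcal{G}_s(f_k) \equiv 0$ on $\R^n\setminus\Omega$ for every $k$, extending $v_0$ by zero outside $\overline{B_R}$ yields a continuous function on $\R^n$ satisfying $v_0\equiv 0$ on $\R^n\setminus\Omega$. Uniform convergence on $\R^n$ is then immediate: for $x\in\overline{B_R}$ it is what Arzelà--Ascoli gives, whereas for $x\notin B_R$ both $\mathcal{G}_s(f_k)(x)$ and $v_0(x)$ vanish identically. This establishes \eqref{eq.limGGssubseq}.

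The compactness statement follows at once: given any bounded sequence in $L^\infty(\Omega)$, the argument above extracts a subsequence whose images converge in the $L^\infty(\R^n)$-norm, so $\mathcal{G}_s$ sends bounded sets into relatively compact subsets of $L^\infty(\R^n)$. There is no real obstacle in this proof; the only mildly delicate point is passing from Arzelà--Ascoli on a large ball to uniform convergence on $\R^n$, which is handled precisely by the homogeneous exterior condition built into the definition of $\mathcal{G}_s$.
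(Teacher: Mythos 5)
Your proof is correct and follows essentially the same route as the paper's: the uniform $C^{0,s}$ bound from Theorem \ref{thm.RosSerra} gives equiboundedness and equicontinuity, Arzelà--Ascoli extracts a uniformly convergent subsequence on a compact set, and the identically-zero exterior condition upgrades this to uniform convergence on all of $\R^n$ and forces $v_0 \equiv 0$ off $\Omega$. The only cosmetic difference is that you apply Arzelà--Ascoli on a large ball $\overline{B_R} \supseteq \overline{\Omega}$, whereas the paper applies it directly on $\overline{\Omega}$ and then glues the limit to zero across $\partial\Omega$; both variants are sound.
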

  \begin{proof}
   First of all, since $\{f_k\}_{k = 1}^\infty$ is bounded in $L^\infty(\Omega)$,
   it follows from~\eqref{eq.estimRosSerra} that the
   family
   $\{\mathcal{G}_s(f_k)\}_{k = 1}^\infty\subseteq C^{\,0,s}(\R^n)$ is equi-continuous and  
   equi-bounded; as a consequence, since
   $\overline{\Omega}$ is compact, by Arzel\`{a}-Ascoli's theorem
   there exists some function $g\in C(\overline{\Omega})$ such that
   (up to a sub-sequence)
   \begin{equation} \label{eq.GGsfiuniformg}
    \lim_{k\to\infty}\mathcal{G}_s(f_k) = g\qquad\text{uniformly on $\overline{\Omega}$}.
    \end{equation}
   In particular, since $\mathcal{G}_s(f_k)\equiv 0$ on $\R^n\setminus\Omega$
   for every $k\in\N$, we have
   $g\equiv 0$ on $\partial\Omega$.
   Thus, introducing the function
   $v_0:\R^n\to\R$ defined by
   $$v_0(x) := \begin{cases}
    g(x), & \text{if $x\in\Omega$}, \\
    0, & \text{if $x\in\R^n\setminus\Omega$},
    \end{cases}
    $$
   from \eqref{eq.GGsfiuniformg} 
   (and since $\mathcal{G}_s(f_k) \equiv v_0\equiv 0$ on $\R^n\setminus \Omega$)
   we get
   $$\lim_{k\to\infty}\|\mathcal{G}_s(f_k)-v_0\|_{L^\infty(\R^n)}
   = \lim_{k\to\infty}\|\mathcal{G}_s(f_k)-v_0\|_{L^\infty(\Omega)} = 0,$$
   which is precisely the desired \eqref{eq.limGGssubseq}. This ends the proof.
  \end{proof}
  Then, we prove that $\mathcal{G}_s$ is \emph{positive} with respect to the cone
  \begin{equation} \label{eq.Cconepositive}
   \mathcal{C} := \big\{f\in L^\infty(\Omega):\,\text{$f\geq 0$ a.e.\,on $\Omega$}\big\}.
   \end{equation}
  \begin{proposition} \label{cor.PositiveGGs}
    If $\mathcal{C}$ is as in \eqref{eq.Cconepositive}, then
    $\mathcal{G}_s\big(\mathcal{C}\big)\subseteq\mathcal{C}$.
   \end{proposition}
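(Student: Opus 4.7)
My plan is to establish the classical weak maximum principle for the fractional Laplacian: I would test the weak formulation of \eqref{eq.pbDeltasHom} against the negative part of the solution and deduce that this negative part must vanish.

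Fix $f\in\mathcal{C}$ and set $u := \mathcal{G}_s(f) = u_f$. Since $u\equiv 0$ on $\R^n\setminus\Omega$, it is enough to show that $u^- := \max\{-u,0\}$ is identically zero. The strategy rests on two facts: first, $u^-$ belongs to $H^s(\R^n)$, since the map $t\mapsto t^-$ is $1$-Lipschitz and hence $|u^-(x)-u^-(y)|\leq|u(x)-u(y)|$; second, $u^-\equiv 0$ on $\R^n\setminus\Omega$ as a direct consequence of $u\equiv 0$ there. After a standard density step (approximating $u^-$ in the $H^s$-norm by functions in $C_0^\infty(\Omega)$, possible thanks to the smoothness of $\de\Omega$), $\varphi = u^-$ becomes an admissible test function in the weak identity of Theorem \ref{thm.RosSerra}, producing
\begin{equation*}
\frac{c_{n,s}}{2}\iint_{\R^{2n}}\frac{(u(x)-u(y))(u^-(x)-u^-(y))}{|x-y|^{n+2s}}\,\d x\,\d y = \int_{\Omega} f\,u^-\,\d x \geq 0,
\end{equation*}
where non-negativity of the right-hand side comes from $f\geq 0$ a.e.~on $\Omega$ and $u^-\geq 0$.

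The key algebraic input is the elementary identity (obtained by decomposing $u = u^+ - u^-$ and using $u^+ u^- \equiv 0$ pointwise)
\begin{equation*}
(u(x)-u(y))(u^-(x)-u^-(y)) = -(u^-(x)-u^-(y))^2 - u^+(x)u^-(y) - u^+(y)u^-(x),
\end{equation*}
whose right-hand side is bounded above by $-(u^-(x)-u^-(y))^2\leq 0$ since $u^\pm\geq 0$. Inserting this estimate into the display above, the non-negative quantity
$\iint_{\R^{2n}}(u^-(x)-u^-(y))^2|x-y|^{-n-2s}\,\d x\,\d y$
is forced to vanish, so $u^-$ is a.e.~constant on $\R^n$; combined with $u^-\equiv 0$ outside $\Omega$, this constant must be zero, giving $u\geq 0$ throughout $\R^n$, as required.

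The only mildly technical point is the density argument legitimising the choice $\varphi = u^-$, since the weak formulation in Theorem \ref{thm.RosSerra} is stated only against $\varphi\in C_0^\infty(\Omega)$. This is handled in routine fashion by invoking the linearity of both sides of the weak identity in $\varphi$, their continuity in the $H^s(\R^n)$-topology, and the density of $C_0^\infty(\Omega)$ in the closed subspace $\{w\in H^s(\R^n):w\equiv 0\text{ on }\R^n\setminus\Omega\}$ when $\de\Omega$ is smooth.
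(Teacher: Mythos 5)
Your argument is correct. The paper's proof is a one-liner: it simply invokes the Weak Maximum Principle for $(-\Delta)^s$ as a known result, combined with the continuity of $u_f$ to upgrade the a.e.\ inequality to a pointwise one. What you have done, in effect, is supply a self-contained proof of that black-box principle by testing the weak formulation against $u^-$; the algebraic identity
\begin{equation*}
(u(x)-u(y))(u^-(x)-u^-(y)) = -(u^-(x)-u^-(y))^2 - u^+(x)u^-(y) - u^+(y)u^-(x)
\end{equation*}
is verified by expanding $u = u^+-u^-$ and using $u^+u^-\equiv 0$, and the sign conclusion then forces the Gagliardo seminorm of $u^-$ to vanish. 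Both routes are sound: the paper's is shorter but relies on an external reference, while yours makes the proposition self-contained at the cost of the density step, which you correctly identify as the only technical point (and which is indeed available for smooth bounded $\Omega$; see Fiscella--Servadei--Valdinoci, already in the paper's bibliography). One small remark: after concluding $u^-\equiv 0$ a.e., you should, as the paper does, also invoke the continuity of $u_f\in C^{0,s}(\R^n)$ guaranteed by Theorem \ref{thm.RosSerra} to get $u_f\geq 0$ \emph{pointwise} on $\R^n$, which is what membership in $\mathcal{C}$ viewed through the Green operator ultimately requires.
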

   \begin{proof}
    Let $f\in\mathcal{C}$ be fixed, and let ${u}_f := \mathcal{G}_s(f)$.
    By definition, ${u}_f$ is the unique solution
    of \eqref{eq.pbDeltasHom}
    in $H^s(\R^n)\cap C^{\,0,s}(\R^n)$; thus, since $f\geq 0$ a.e.\,on $\Omega$
    (as $f\in\mathcal{C}$), from the Weak Maximum Principle
    and the continuity of ${u}_f$ we get
    $$\text{${u}_f = \mathcal{G}_s(f)\geq 0$ point-wise in $\R^n$}.$$
    This ends the proof.
   \end{proof}
   We close this section by briefly studying the
   \emph{spec\-trum} of $\mathcal{G}_s$.
   To this end, we first
   recall a result on the eigenvalues
   of $(-\Delta)^s$, which easily follows by combining
   \cite[Prop.\,9]{SerVal} and \cite[Cor.\,1.6]{RosSerra}
   (see also \cite[Prop.\,4]{SerVal2}).
   \begin{theorem} \label{thm.eigenDeltas}
    There exists a countable set $\Lambda\subseteq (0,\infty)$ such that,
    if $\lambda\in\Lambda$, there exists a solution
    $e_{\lambda}\in H^s(\R^n)\cap C^{\,0,s}(\R^n)$ of the eigenvalue problem
    \begin{equation} \label{eq.eigenPbDeltas}
    \begin{cases}
      (-\Delta)^s u = \lambda\,u & \text{in $\Omega$}, \\[0.1cm]
      u \equiv 0 & \text{on $\R^n\setminus\Omega$},
     \end{cases}
     \end{equation}
    This means, precisely, that $e_\lambda\equiv 0$ in $\R^n\setminus\Omega$ and
    $$
     \frac{c_{n,s}}{2}\iint_{\R^{2n}}\frac{\big(e_{\lambda}(x)-e_{\lambda}(y)\big)
     \big(\varphi(x)-\varphi(y)\big)}
     {|x-y|^{n+2s}}\,\d x\,\d y 
      = \lambda\int_{\Omega}e_{\lambda}\,\varphi\,\d x$$
      for every test function $\varphi\in C_0^\infty(\Omega)$.
   \end{theorem}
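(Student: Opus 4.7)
The approach is to assemble two known ingredients from the literature, as the parenthetical hint in the statement already suggests. The plan is therefore \emph{not} to prove the spectral theory from scratch but to glue together a weak existence result with a regularity upgrade.

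First, I would invoke Proposition~9 of \cite{SerVal} applied to the Dirichlet eigenvalue problem for $(-\Delta)^s$ on $\Omega$: this yields a countable, increasing sequence $\{\lambda_k\}_{k\ge 1}\subseteq(0,\infty)$ (with $\lambda_k\to\infty$) and, for each $k$, a weak eigenfunction $e_{\lambda_k}$ in the fractional energy space
$$
  \{u\in H^s(\R^n):\,u\equiv 0\ \text{on}\ \R^n\setminus\Omega\}
$$
satisfying exactly the weak formulation written in the statement. Setting $\Lambda:=\{\lambda_k:k\ge 1\}$ then produces the required countable subset of $(0,\infty)$.

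Second, I would upgrade the regularity of each $e_\lambda$ from $H^s(\R^n)$ to $C^{\,0,s}(\R^n)$ by means of Corollary~1.6 of \cite{RosSerra}. Reading the eigenvalue equation as $(-\Delta)^s e_\lambda = f$ with $f:=\lambda\, e_\lambda$, that corollary guarantees that a bounded weak solution of the Dirichlet problem with $L^\infty$ right-hand side lies in $C^{\,0,s}(\R^n)$, together with the associated H\"older estimate. Plugging $f=\lambda e_\lambda$ in therefore gives $e_\lambda\in C^{\,0,s}(\R^n)$ as required.

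The only non-cosmetic point, which I expect to be the main obstacle, is the $L^\infty$ bound on $e_\lambda$ needed before Corollary~1.6 can be applied. Starting only from $e_\lambda\in H^s(\R^n)$ one has a priori just $e_\lambda\in L^{2n/(n-2s)}(\Omega)$; the step $L^p\Rightarrow L^\infty$ on the eigenequation is classical and is obtained by a Moser/De Giorgi bootstrap on $(-\Delta)^s e_\lambda=\lambda e_\lambda$ (this is precisely the content invoked through the reference to Proposition~4 of \cite{SerVal2} that the authors quote parenthetically). Once this $L^\infty$ bound is established, Corollary~1.6 of \cite{RosSerra} applies verbatim and the proof is complete, with no further ingredient needed.
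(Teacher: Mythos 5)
Your proposal matches the paper's (unwritten) argument: the paper gives no proof of this theorem, stating only that it ``easily follows by combining [Prop.\,9, SerVal] and [Cor.\,1.6, RosSerra] (see also [Prop.\,4, SerVal2])'', and you have correctly reconstructed exactly that combination---spectral theory for the weak eigenfunctions from Servadei--Valdinoci, the H\"older regularity upgrade from Ros-Oton--Serra, and the intermediate $L^\infty$ bound (the one genuinely nontrivial glue step) from [SerVal2, Prop.\,4]. Your identification of the $L^\infty$ bootstrap as the only non-cosmetic obstacle, and of the ``see also'' citation as the place it is resolved, shows you understood why all three references are needed.
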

   Thanks to Theorem \ref{thm.eigenDeltas}, we can prove the
   following proposition.
   \begin{proposition} \label{prop.spectrumGGs}
    Let $r(\mathcal{G}_s)$ denote the spectral radius of $\mathcal{G}_s$,
    thought of as an operator from $L^\infty(\Omega)$ into itself.
    Then, the following facts hold:
    \begin{itemize}
     \item[\emph{(i)}] $r(\mathcal{G}_s) > 0$;
     \item[\emph{(ii)}]
    there exists a function $\phi\in C^{\,0,s}(\R^n)\setminus\{0\}$ such that
    $$\mathcal{G}_s\big(\phi|_{\Omega}\big) = r(\mathcal{G}_s)\,\phi\qquad\text{and}
    \qquad \text{$\phi \equiv 0$ on $\R^n\setminus \Omega$}.$$
    \end{itemize}
   \end{proposition}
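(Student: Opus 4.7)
The statement is the fractional analogue of the classical fact that the inverse of the Dirichlet Laplacian is a compact positive operator whose spectral radius is its principal eigenvalue. Accordingly, I would argue as follows: first establish (i) by transferring an eigenvalue of $(-\Delta)^s$ supplied by Theorem \ref{thm.eigenDeltas} to an eigenvalue of $\mathcal{G}_s$, and then obtain (ii) by invoking the Krein-Rutman theorem on the cone $\mathcal{C}\subseteq L^\infty(\Omega)$.

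For (i), pick $\lambda\in\Lambda$ and an associated eigenfunction $e_\lambda\in H^s(\R^n)\cap C^{\,0,s}(\R^n)$ from Theorem \ref{thm.eigenDeltas}. Since $e_\lambda$ is continuous and vanishes outside $\Omega$, the function $g:=\lambda\,e_\lambda|_\Omega$ lies in $L^\infty(\Omega)$, and the weak eigenvalue identity in Theorem \ref{thm.eigenDeltas} is exactly the weak formulation of \eqref{eq.pbDeltasHom} with datum $f=g$. The uniqueness clause in Theorem \ref{thm.RosSerra} then forces
\[
\mathcal{G}_s\bigl(e_\lambda|_\Omega\bigr)=\tfrac{1}{\lambda}\,e_\lambda\quad\text{on }\R^n,
\]
so, restricting to $\Omega$, we see that $1/\lambda>0$ is an eigenvalue of $\mathcal{G}_s$ on $L^\infty(\Omega)$ with nonzero eigenvector $e_\lambda|_\Omega$, whence $r(\mathcal{G}_s)\geq 1/\lambda>0$.

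For (ii), I combine Propositions \ref{prop.GGscompact} and \ref{cor.PositiveGGs} with Krein-Rutman. Since $\mathcal{G}_s(f)\equiv 0$ outside $\Omega$, convergence in $L^\infty(\R^n)$ of sequences in the range coincides with convergence in $L^\infty(\Omega)$, so Proposition \ref{prop.GGscompact} gives that $\mathcal{G}_s$ is compact as a self-map of $L^\infty(\Omega)$; Proposition \ref{cor.PositiveGGs} gives $\mathcal{G}_s(\mathcal{C})\subseteq\mathcal{C}$. The cone $\mathcal{C}$ is solid in $L^\infty(\Omega)$ (the constant function $\mathbf{1}_\Omega$ is an interior point), and hence total. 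Together with (i), the classical Krein-Rutman theorem delivers $\psi\in\mathcal{C}\setminus\{0\}$ with $\mathcal{G}_s(\psi)=r(\mathcal{G}_s)\,\psi$ in $L^\infty(\Omega)$. Setting $\phi:=\mathcal{G}_s(\psi)$ produces a function in $C^{\,0,s}(\R^n)$ that vanishes on $\R^n\setminus\Omega$ and satisfies $\phi|_\Omega=r(\mathcal{G}_s)\,\psi$ (a.e.), so by linearity of $\mathcal{G}_s$,
\[
\mathcal{G}_s\bigl(\phi|_\Omega\bigr)=r(\mathcal{G}_s)\,\mathcal{G}_s(\psi)=r(\mathcal{G}_s)\,\phi,
\]
and $\phi\not\equiv 0$ because $\psi\neq 0$ in $L^\infty(\Omega)$.

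The only mildly delicate step is the hypothesis check for Krein-Rutman in this particular $L^\infty$ setting, namely that $\mathcal{C}$ is total and that $r(\mathcal{G}_s)>0$; once this is in place the passage from the abstract eigenvector $\psi\in L^\infty(\Omega)$ to the Hölder-continuous eigenfunction $\phi$ defined on all of $\R^n$ is automatic, because the regularity $\phi\in C^{\,0,s}(\R^n)$ and the boundary condition $\phi\equiv 0$ on $\R^n\setminus\Omega$ are built into the definition of the Green operator.
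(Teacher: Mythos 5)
Your proof is correct and follows essentially the same route as the paper: part (i) transfers an eigenvalue of $(-\Delta)^s$ from Theorem \ref{thm.eigenDeltas} to $\mathcal{G}_s$ via the uniqueness in Theorem \ref{thm.RosSerra}, and part (ii) invokes Krein-Rutman using Propositions \ref{prop.GGscompact} and \ref{cor.PositiveGGs}, then upgrades the $L^\infty$ eigenvector to a $C^{\,0,s}(\R^n)$ function through the eigenvalue equation. The only cosmetic difference is that you verify the cone hypothesis by noting $\mathcal{C}$ is solid (hence total), whereas the paper directly notes $\mathcal{C}-\mathcal{C}=L^\infty(\Omega)$; both suffice.
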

   \begin{proof}
    (i)\,\,On account of Theorem \ref{thm.eigenDeltas}, we can find
    a real $\lambda > 0$ and a function 
    $e_\lambda\in C^{\,0,s}(\R^n)\cap H^s(\R^n)$,
    not identically vanishing, such that
    $$
     \begin{cases}
      (-\Delta)^s e_\lambda = \lambda\,e_\lambda & \text{in $\Omega$}, \\[0.1cm]
      e_\lambda \equiv 0 & \text{on $\R^n\setminus\Omega$},
     \end{cases}
    $$
    As a consequence, since $e_\lambda|_{\Omega}\in L^\infty(\Omega)$
    (as $e_\lambda\in C^{\,0,s}(\R^n)$), we have
    $$\mathcal{G}_s\big(e_\lambda|_{\Omega}\big) = \frac{1}{\lambda}\,e_\lambda,$$
    and this proves that $r(\mathcal{G}_s) > 0$, as desired. \medskip
    
    (ii)\,\,We prove the assertion by using the well-known Krein-Rutman theorem.
    To this end we first observe that, if $\mathcal{C}$ is the cone defined
    in \eqref{eq.Cconepositive}, one has: \medskip
    
    (a)\,\,$\mathcal{C}-\mathcal{C}$ is
    dense in $L^\infty(\Omega)$ (actually, $\mathcal{C}-\mathcal{C} = L^\infty(\Omega)$); \medskip
    
    (b)\,\,$\mathcal{G}_s(\mathcal{C})\subseteq\mathcal{C}$ (see 
    Corollary \ref{cor.PositiveGGs}).
    \medskip
    
    \noindent Moreover, from (i) we know that $r(\mathcal{G}_s) > 0$.
    Thus, since Proposition \ref{prop.GGscompact}
    en\-su\-res that $\mathcal{G}_s$ is compact from $L^\infty(\Omega)$ into itself,
    we can invoke Krein-Rutman's theorem, ensuring that $\lambda = r(\mathcal{G}_s)$
    is an eigenvalue of $\mathcal{G}_s$. This means 
    that there exists a function $\phi\in L^\infty(\Omega)$, not identically vanishing,
    such that
    \begin{equation}\label{eq.phieigenfun}
     \mathcal{G}_s(\phi) = r(\mathcal{G}_s)\,\phi.
     \end{equation}
    On the other hand, since $\mathcal{G}_s(\phi) = {u}_\phi\in C^{\,0,s}(\R^n)$
    and vanishes on $\R^n\setminus \Omega$
    (remind that ${u}_\phi$ is the solution
    of \eqref{eq.pbDeltasHom}), 
    from \eqref{eq.phieigenfun} we conclude that
    $$\phi = \frac{1}{r(\mathcal{G}_s)}\,{u}_\phi\in C^{\,0,s}(\R^n)
    \qquad\text{and}\qquad\text{$\phi\equiv 0$ on $\R^n\setminus\Omega$}.$$
    This ends the proof.
   \end{proof}
\subsection{The non-homogeneous Dirichlet problem for $(-\Delta)^s$} \label{subsec.nonhomog}
 Due to its re\-le\-van\-ce in the sequel, we spend a few words about the
 non-homogeneous Dirichlet problem for $(-\Delta)^s$, that is, 
 \begin{equation} \label{eq.pbDirNonHom}
  \begin{cases}
  (-\Delta)^s u = f & \text{in $\Omega$}, \\
  u \equiv \zeta & \text{in $\R^n\setminus\Omega$}.
  \end{cases}
 \end{equation}
 Avoiding to discuss the solvability of \eqref{eq.pbDirNonHom} 
 for general $f$ and $\zeta$, here we limit 
 to observe that, when $\zeta$ is sufficiently regular, problem \eqref{eq.pbDirNonHom} 
 can be reduced
 to \eqref{eq.pbDeltasHom} (with a different $f$).
 In fact, let us suppose that
 $$\zeta\in H^s(\R^n)\cap 
 C_b(\R^n)\cap C^2(\overline{\mathcal{O}}),$$
 where $\mathcal{O}\subseteq\R^n$ is some open neighborhood of $\overline{\Omega}$.
 Then, it is not dif\-ficult to see that
 $(-\Delta)^s\zeta$ can be computed point-wise in $\Omega$, and
 $$(-\Delta)^s\zeta(x)
 = -\frac{c_{n,s}}{2}\int_{\R^n}\frac{\zeta(x+z)+\zeta(x-z)-2\zeta(x)}{|z|^{n+2s}}\,\d z
 \quad (x\in\Omega).$$
 In particular, $(-\Delta)^s\zeta\in L^\infty(\Omega).$
 Moreover, since $\zeta\in H^s(\R^n)$, a standard
 `in\-tegra\-tion-by-parts' argument gives, for every test 
 function $\varphi\in C_0^\infty(\Omega)$,
 \begin{equation} \label{eq.intbypars}
 \int_{\Omega}(-\Delta)^s\zeta\cdot\varphi\,\d x
 = \frac{c_{n,s}}{2}\int_{\R^n\times\R^n}
 \frac{\big(\zeta(x)-\zeta(y)\big)\big(\varphi(x)-\varphi(y)\big)}
     {|x-y|^{n+2s}}\,\d x\,\d y.
  \end{equation}
  On account of these facts, we easily derive the following result.
  \begin{theorem} \label{thm.RosSerranonomog}
   Let $f\in L^\infty(\Omega)$ and let
   $\zeta\in H^s(\R^n)\cap C_b\R^n)\cap C^2(\mathcal{O})$,
   where $\mathcal{O}\subseteq\R^n$ is an open neighborhood of $\overline{\Omega}$.
   Then, the function
   $$u_{f,\,\zeta} := \GG_s\big(f-(-\Delta)^s\zeta\big)+\zeta \in H^s(\R^n)\cap C(\R^n)$$
   is the unique \emph{(}weak\emph{)}
   solution of \eqref{eq.pbDirNonHom}. This means, precisely, that
   $$\frac{c_{n,s}}{2}\iint_{\R^{2n}}
   \frac{(u_{f\,\zeta}(x)-u_{f\,\zeta}(y))(\varphi(x)-\varphi(y))}{|x-y|^{n+2s}}\,\d x\,\d y
   = \int_{\Omega}f\varphi\,\d x\quad
 \forall\,\,\varphi\in C_0^\infty(\Omega),$$
   and $u_{f,\,\zeta} \equiv \zeta$ pointwise on $\R^n\setminus\Omega$.
  \end{theorem}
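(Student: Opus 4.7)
The plan is to verify directly that the candidate $u_{f,\zeta} := \GG_s\big(f-(-\Delta)^s\zeta\big)+\zeta$ satisfies all the required properties, and then to obtain uniqueness from the uniqueness already established in Theorem \ref{thm.RosSerra}.

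First I would check that the formula is well-defined. Since $\zeta\in H^s(\R^n)\cap C_b(\R^n)\cap C^2(\OO)$ with $\OO\supseteq\overline{\Omega}$, the discussion preceding the statement shows that $(-\Delta)^s\zeta$ can be computed pointwise on $\Omega$ and that $(-\Delta)^s\zeta\in L^\infty(\Omega)$. Hence $g:=f-(-\Delta)^s\zeta\in L^\infty(\Omega)$, and by Theorem \ref{thm.RosSerra} the function $v:=\GG_s(g)$ is the unique element of $H^s(\R^n)\cap C^{\,0,s}(\R^n)$ solving the homogeneous problem with datum $g$. In particular $v\equiv 0$ on $\R^n\setminus\Omega$, so $u_{f,\zeta}=v+\zeta$ coincides with $\zeta$ outside $\Omega$, as required. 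The membership $u_{f,\zeta}\in H^s(\R^n)\cap C(\R^n)$ follows since both $v$ and $\zeta$ lie in $H^s(\R^n)$ and both are continuous on $\R^n$ (here $\zeta\in C_b(\R^n)$ supplies continuity outside $\OO$ and $C^2(\OO)$ supplies it on $\OO$).

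Next I would verify the weak formulation. Fix $\varphi\in C_0^\infty(\Omega)$. By the weak formulation satisfied by $v=\GG_s(g)$,
\begin{equation*}
\frac{c_{n,s}}{2}\iint_{\R^{2n}}\frac{(v(x)-v(y))(\varphi(x)-\varphi(y))}{|x-y|^{n+2s}}\,\d x\,\d y=\int_\Omega\big(f-(-\Delta)^s\zeta\big)\varphi\,\d x.
\end{equation*}
On the other hand, the integration-by-parts identity \eqref{eq.intbypars}, which applies precisely because $\zeta\in H^s(\R^n)\cap C^2(\OO)$, yields
\begin{equation*}
\frac{c_{n,s}}{2}\iint_{\R^{2n}}\frac{(\zeta(x)-\zeta(y))(\varphi(x)-\varphi(y))}{|x-y|^{n+2s}}\,\d x\,\d y=\int_\Omega(-\Delta)^s\zeta\cdot\varphi\,\d x.
\end{equation*}
Summing the two identities and using the bilinearity of the Gagliardo form in $u_{f,\zeta}=v+\zeta$ produces exactly the stated weak equation, with the $(-\Delta)^s\zeta$ terms cancelling.

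For uniqueness, suppose $\tilde u\in H^s(\R^n)\cap C(\R^n)$ is another weak solution. Then $w:=u_{f,\zeta}-\tilde u$ lies in $H^s(\R^n)$, vanishes pointwise on $\R^n\setminus\Omega$, and satisfies the weak formulation with right-hand side $f\equiv 0$. Theorem \ref{thm.RosSerra}, applied to the zero datum, forces $w\equiv 0$. The main obstacle in this argument is essentially bookkeeping: one must be sure that $\zeta$ is regular enough for \eqref{eq.intbypars} to hold \emph{as stated}, and that the sum $v+\zeta$ retains $H^s$ regularity and continuity across $\partial\Omega$; the hypotheses $\zeta\in H^s(\R^n)\cap C_b(\R^n)\cap C^2(\OO)$ are tailored precisely to make both of these facts automatic, so no substantive new estimate is required.
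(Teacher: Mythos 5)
Your proof is correct and takes exactly the route the paper intends: the paper states Theorem \ref{thm.RosSerranonomog} without proof, having already laid out the three facts you combine — that $(-\Delta)^s\zeta$ is pointwise computable in $\Omega$ and lies in $L^\infty(\Omega)$, the integration-by-parts identity \eqref{eq.intbypars}, and the existence/uniqueness/regularity package of Theorem \ref{thm.RosSerra} — so your assembly of them via bilinearity of the Gagliardo form is precisely the ``easily derived'' argument the authors have in mind. The one place to be a bit more careful is the uniqueness step: Theorem \ref{thm.RosSerra} is phrased as uniqueness of a solution lying in $H^s(\R^n)\cap C^{\,0,s}(\R^n)$, whereas your difference $w=u_{f,\zeta}-\tilde u$ is a priori only in $H^s(\R^n)\cap C(\R^n)$; what you actually need, and what the variational construction underlying \cite{RosSerra} delivers, is uniqueness within the space of $H^s(\R^n)$ functions vanishing on $\R^n\setminus\Omega$, obtained from coercivity of the Gagliardo form together with density of $C_0^\infty(\Omega)$ in that space. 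With that gloss the argument is complete.
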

  \begin{remark} \label{rem.regulufzeta}
   We explicitly notice that, since $f$ and $(-\Delta)^s\zeta$ 
   are in $L^\infty(\Omega)$,
   by the very definition of $\GG_s$ one has
   $v:=\GG_s\big(f-(-\Delta)^s\zeta\big)\in C^{\,0,s}(\R^n)$;
   as a con\-se\-quen\-ce, if $\zeta\in C^{\,0,\alpha}(\R^n)$ for some $\alpha\in (0,1)$, 
   we derive that
   $$u_{f,\,\zeta} = v+\zeta\in C^{\,0,\theta}(\R^n),\qquad
   \text{where $\theta := \min\{s,\alpha\}$}.$$
  \end{remark}
\subsection{The fixed point index} \label{sec.index}
For the sake of completeness, 
we collect in the following proposition some properties of the classical fixed point index that will be crucial in the proof of our existence result;
for more details see, e.g.,~\cite{Amann-rev, guolak}. 

In what follows the closure and the boundary of subsets of a cone $\hat{P}$ are understood to be relative to~$\hat{P}$.
\begin{proposition}\label{propindex}
Let $X$ be a real Banach space and let $\hat{P}\subset X$ be a cone. Let $D$ be an open bounded set of $X$ with $0\in D\cap \hat{P}$ and
$\overline{D\cap \hat{P}}\ne \hat{P}$. 
Assume that $ T:\overline{D\cap \hat{P}}\to \hat{P}$ is a compact operator such that
$x\neq T(x)$ for $x\in \partial (D\cap \hat{P})$. \medskip
Then the fixed point index $i_{\hat{P}}( T, D\cap \hat{P})$ has the following properties:
 
\begin{itemize}

\item[\emph{(i)}] If there exists $e\in \hat{P}\setminus \{0\}$
such that $x\neq T(x)+\sigma e$ for all $x\in \partial (D\cap \hat{P})$ and all
$\sigma>0$, then $i_{\hat{P}}( T, D\cap \hat{P})=0$.

\item[\emph{(ii)}] If $ T(x) \neq \sigma x$ for all $x\in
\partial  (D\cap \hat{P})$ and all $\sigma > 1$, then $i_{\hat{P}}( T, D\cap \hat{P})=1$.

\item[\emph{(iii)}] Let $D^{1}$ be an open bounded subset of $X$ such that
$(\overline{D^{1}\cap \hat{P}})\subset (D\cap \hat{P})$. If 
$i_{\hat{P}}(T, D\cap \hat{P})=1$ and 
$i_{\hat{P}}(T, D^{1}\cap \hat{P})=0$, then $T$ has a fixed point in $(D\cap \hat{P})\setminus
(\overline{D^{1}\cap \hat{P}})$.\\ The same holds if 
$i_{\hat{P}}(T, D\cap \hat{P})=0$ and $i_{\hat{P}}(T, D^{1}\cap \hat{P})=1$.
\end{itemize}
\end{proposition}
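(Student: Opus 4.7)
The statement gathers three classical homotopy-type consequences of the axiomatic properties (normalization, additivity/excision, homotopy invariance) of the fixed point index in cones, as developed in \cite{Amann-rev, guolak}. My plan is to derive each item from \emph{homotopy invariance} together with the \emph{normalization} on constant maps; the work is largely a repackaging of facts already established in those references.

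For item~(i) I would employ the affine homotopy $h(t,x) := T(x) + t\,e$ for $t \in [0, t_0]$, with $t_0 > 0$ chosen so large that $\sup_{x \in \overline{D \cap \hat{P}}} \|T(x) + t_0 e\|$ exceeds $\sup_{y \in \overline{D}}\|y\|$; such a $t_0$ exists because $T$ is compact (so $T(\overline{D \cap \hat{P}})$ is bounded) and $e \neq 0$. The hypothesis rules out fixed points on $\partial(D \cap \hat{P})$ for every $t \in [0, t_0]$, and at $t = t_0$ the map has no fixed point inside $\overline{D \cap \hat{P}}$ at all; homotopy invariance then yields $i_{\hat{P}}(T, D \cap \hat{P}) = i_{\hat{P}}(h(t_0, \cdot), D \cap \hat{P}) = 0$.

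For item~(ii) I would use the straight-line homotopy $h(t, x) := t\,T(x)$, $t \in [0, 1]$. A fixed point $x \in \partial(D \cap \hat{P})$ of $h(t, \cdot)$ with $t \in (0, 1)$ would give $T(x) = (1/t)\,x$ with $1/t > 1$, contradicting the assumption; the case $t = 1$ is ruled out by the standing hypothesis $x \neq T(x)$ on the boundary; and $t = 0$ would force $x = 0$, impossible since $0$ is an interior point of $D \cap \hat{P}$ (using that $D$ is open and $0 \in D \cap \hat{P}$). At $t = 0$ the map is identically zero, its unique fixed point $0$ lies in $D \cap \hat{P}$, and the normalization axiom assigns index~$1$; homotopy invariance then transfers this value to $T$.

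Finally, item~(iii) follows from the \emph{additivity} axiom combined with the \emph{existence} property: applied to the two disjoint open subsets $D^{1} \cap \hat{P}$ and $(D \cap \hat{P}) \setminus \overline{D^{1} \cap \hat{P}}$ of $D \cap \hat{P}$, additivity gives
\[
 i_{\hat{P}}\bigl(T,\,(D \cap \hat{P}) \setminus \overline{D^{1} \cap \hat{P}}\bigr) = i_{\hat{P}}(T, D \cap \hat{P}) - i_{\hat{P}}(T, D^{1} \cap \hat{P}) = \pm 1 \neq 0,
\]
and a nonzero index forces a fixed point in the corresponding open set. The only mildly subtle point in the whole argument is the treatment of the $t = 0$ endpoint in~(ii), which relies on $0$ being in the relative interior of $D \cap \hat{P}$; this is ensured by the explicit hypothesis $0 \in D \cap \hat{P}$ together with the openness of $D$.
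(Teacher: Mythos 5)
The paper itself does not supply a proof of this proposition: it records the three properties as classical facts about the fixed point index in cones and refers the reader to \cite{Amann-rev, guolak}. Your argument is a correct reconstruction of the standard derivation from the axioms of the index (normalization, additivity/excision, homotopy invariance, and the solution property), and it is essentially the one found in those references, so there is no genuine divergence in approach --- you are simply supplying the details that the paper delegates to the literature.

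One small slip in item (i): to conclude that $h(t_0,\cdot)=T(\cdot)+t_0 e$ is fixed-point free on all of $\overline{D\cap \hat P}$, you want \emph{every} value $\|T(x)+t_0 e\|$, $x\in\overline{D\cap\hat P}$, to exceed $\sup_{y\in\overline D}\|y\|$; that is, you need the \emph{infimum} of $\|T(x)+t_0 e\|$ over $x$ to exceed this supremum, not the supremum as you wrote. This is easily arranged, since $\|T(x)+t_0 e\|\ge t_0\|e\|-\sup_{x}\|T(x)\|$ and $T(\overline{D\cap\hat P})$ is bounded by compactness, but the condition as stated does not by itself rule out fixed points. Apart from that, your handling of the $t=0$ and $t=1$ endpoints in (ii), the use of the cone structure to keep both homotopies inside $\hat P$, and the appeal to additivity plus the existence property in (iii) are all sound.
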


   \section{Existence of positive solutions} \label{sec.existence}
   In this section we state and prove our main existence result
   for positive solutions of \eqref{eq.PBIntro}, namely 
   Theorem \ref{thm.mainExistence} below.
   Before doing this, we fix the relevant 
   `struc\-tu\-ral' assumptions which 
   shall be tacitly understood in the sequel.
   \medskip

   As already mentioned in the Introduction, our aim is
   to prove the existence of positive solutions
   for Dirichlet problems 
   of the following form
   
   \begin{equation} \label{eq.MainDirPB}
 \left\{
  \begin{array}{lll}
    (-\Delta)^{s_i}u_i = \lambda_i\,f_i(x,\mathbf{u},\PP_i[\mathbf{u}]) & \text{in $\Omega$}
    & (i = 1,\ldots,m), \\
    u_i(x)= \eta_i\,\zeta_i(x)\,\BB_i[\mathbf{u}],
    & \text{for $x\in \R^n\setminus\Omega$} & (i = 1,\ldots,m).
  \end{array}
\right.
   \end{equation}
   Here, $m\geq 1$ is a fixed integer,
   $\mathbf{u} = (u_1,\ldots,u_m)$, with $u_1,\ldots,u_m:\R^n\to\R$, 
   $s_1,\ldots,s_m\in (0,1)$ and,
   for every fixed $i \in \{1,\ldots,m\}$, we assume that
   \begin{itemize}
   \item[(H0)] $\Omega\subseteq\R^n$ is a bounded open set with smooth boundary;
   \item[(H1)] $f_i$ is a real-valued function defined on $\overline \Omega\times \R^m\times\R$;
   \item[(H2)] $\PP_i,\,\BB_i$ are real-valued operators acting on the space
   \begin{equation} \label{eq.defSpaceX}
    \mathbb{X}  := \big\{u\in C(\R^n;\R^m):\,\sup_{\R^n}|u_i|<\infty\,\,\text{
   for all $i = 1,\ldots,m$}.\big\};
   \end{equation}
   \item[(H3)] $\zeta_i\in H^{s}(\R^n)\cap C_b(\R^n)
   \cap C^2(\mathcal{O})$ and $\zeta_i\geq 0$
   on $\R^n$, where
    \begin{equation} \label{eq.defsmin}
    s = \min_{i = 1,\ldots,m}s_i;
   \end{equation}
   and $\mathcal{O}\subseteq\R^n$ is a suitable open neighborhood of $\overline{\Omega}$;
   \item[(H4)] $\lambda_i,\,\eta_i$ are non-negative parameters.
   \end{itemize}
   In dealing with vector-valued functions $\mathbf{u}=(u_1,\ldots,u_m):\R^n\to\R^m$,
   it is more convenient to use on the spaces $\R^p$ (for $p\in\N$) 
   the \emph{maximum norm}, that is,
   $$\|z\| := \max_{j = 1,\ldots,p}|z_j|\qquad\text{for all $z\in\R^p$}.$$
   Thus, if $\mathbb{X}$ is as in 
   \eqref{eq.defSpaceX} and $\mathbf{u}\in\mathbb{X}$, we define
   $$\|\mathbf{u}\|_\infty := \sup_{x\in\R^n}\|\mathbf{u}(x)\|
   = \max_{i = 1,\ldots,m}\big(\sup_{\R^n}|u_i(x)|\,\big).$$
   Obviously, $(\mathbb{X},\|\cdot\|_\infty)$ is a (real) Banach space. \bigskip
   
   Now, we have already anticipated that we aim to study
   the solvability of problem \eqref{eq.MainDirPB} by means of
   suitable fixed-point techniques; on the other hand,
   since the `non-local' boundary conditions are prescribed on the complementary
   of $\Omega$, one should work in the space $C(\R^n;\R^m)$, which \emph{is not}
   a Banach space.
   
   To overcome this issue, we make the following key observation:
   if
   $\mathbf{u}:\R^n\to\R^m$ is a (continuous)
   function solving \eqref{eq.MainDirPB} point-wise in $\R^n$, then
   $$\text{$u_i(x) = \eta_i\,\zeta_i(x)\,\BB_i[\mathbf{u}]$\quad for all $x$ on $\R^n\setminus\Omega$}
   \qquad
   (i = 1,\ldots,m);$$
   as a consequence, since $\mathbf{u}$ is continuous on $\R^n$
   and $\zeta_i\in C_b(\R^n)$, we deduce that
   $\mathbf{u}\in\mathbb{X}$, and $(\mathbb{X},\|\cdot\|_\infty)$ \emph{is}
   a Banach space.
   We then use the results in Section \ref{sec.preliminaries} to
   define a suitable functional $\mathcal{T}$, acting on 
   the space $\mathbb{X}$,
   allowing us to rephrase problem \eqref{eq.MainDirPB}
   into the fixed-point equation $\mathcal{T}(\mathbf{u}) = \mathbf{u}$ in $\mathbb{X}$.
   \medskip
      
   To begin with, if $i \in \{1,\ldots,m\}$ is fixed, for the sake of simplicity we denote
   $\mathcal{G}^i$ the $(-\Delta)^{s_i}$-Green operator $\mathcal{G}_{s_i}$
   defined in \eqref{eq.defGreens}.
   Let us recall that 
   $$\GG^i:L^\infty(\Omega)\to C^{0,s_i}(\R^n)$$
   and, by definition, 
   $\mathcal{G}^i(\f) = {u}_{\f}$
   is the unique solution  of $(\mathrm{P})_{\f,\,0}$ in 
   $C^{\,0,s_i}(\R^n)$ (with $\f\in L^\infty(\Omega)$). 
   In particular, $\GG^i(\f)\equiv 0$ on $\R^n\setminus \Omega$. As a consequence, if
   $s\in(0,1)$ is as in \eqref{eq.defsmin}
   we derive that
   \begin{equation} \label{eq.KiXinX}
    \mathcal{G}^i\big(L^\infty(\Omega)\big)\subseteq L^\infty(\R^n)
    \cap C^{\,0,s}(\R^n)\qquad
    \text{for all $i = 1,\ldots,m$}.
    \end{equation}
   According to Proposition \ref{prop.spectrumGGs}, 
   we then let $r_i = r(\mathcal{G}^i) > 0$ be the spectral radius
   of $\mathcal{G}^i$, thought of as an operator 
   from $L^\infty(\Omega)$ into itself,
   and we fix once and for all
    a function $\phi_i\in C^{\,0,s}(\R^n)\setminus\{0\}$
    such that (setting $\mu_i := 1/r_i$)
       \begin{equation} \label{def.phii}
    \phi_i = \mu_i\,\mathcal{G}^i\big(\phi_i|_\Omega\big)\qquad\text{and}\qquad
    \text{$\phi_i\equiv 0$ on $\R^n\setminus\Omega$}.
        \end{equation}
   To proceed further, we define
   the \emph{Nemytskii} operator $\mathcal{F}_i$ as
   $$\mathcal{F}_i(\mathbf{u})
   = f_i(\cdot,\mathbf{u}(\cdot),\mathcal{P}_i[\mathbf{u}])\qquad(\mathbf{u}\in\mathbb{X}),$$
   and we assume for a moment that, for any $\bldu \in \mathbb{X}$, we have
   $\mathcal{F}_i(\bldu)\in L^\infty(\Omega)$; we will prove later that this holds under the assumptions of Theorem \ref{thm.mainExistence}. In this case, 
   taking into account \eqref{eq.KiXinX}, we can set
   \begin{equation} \label{eq.defOpI}
	\mathcal{I}(\mathbf{u}) := \Big(\lambda_i\,\mathcal{G}^i
	\big(\mathcal{F}_i(\mathbf{u})\big)\Big)_{i = 1,\ldots,m}\in\mathbb{X}.
   \end{equation}
   Furthermore, if $\gamma_i\in H^s(\R^n)\cap C(\R^n)$ is the unique solution of
   \begin{equation} \label{eq.defgammaiPb}
    \begin{cases}
   (-\Delta)^{s_i}u = 0 & \text{in $\Omega$}, \\[0.1cm]
   u \equiv \zeta_i & \text{in $\R^n\setminus\Omega$}
   \end{cases}
   \end{equation}
   (according to Theorem \ref{thm.RosSerranonomog}), given $\bldu \in \mathbb{X}$ we define
   \begin{equation} \label{eq.defOpD}
    \mathcal{D}(\mathbf{u}) :=
    \Big(\eta_i\,\gamma_i(\cdot)\,\BB_i[\mathbf{u}]\Big)_{i = 1,\ldots,m}.
   \end{equation}
   We explicitly notice that, since $\gamma_i\in C(\R^n)$,
   $\zeta_i\in C_b(\R^n)$ and
   $\gamma_i\equiv\zeta_i$ on $\R^n\setminus\Omega$, we have $\gamma_i\in\mathbb{X}$;
   thus, since $\mathcal{D}(\mathbf{u})$ is a scalar multiple of $\gamma_i$, one has
   $\mathcal{D}(\mathbf{u})\in\mathbb{X}$. \medskip

   Using the operators $\mathcal{I}$ and $\mathcal{D}$
   just introduced,
   we can finally provide the precise
   definition of \emph{solution} of problem
   \eqref{eq.MainDirPB}.
   \begin{definition} \label{def.solutionPB}
    We say that a function $\mathbf{u} = (u_1,\ldots,u_m)\in\mathbb{X}$
    is a~\emph{solution} of problem \eqref{eq.MainDirPB} if
    it satisfies the following properties:
    \begin{itemize}
    \item[(i)] $\mathcal{F}_i(\bldu)\in L^\infty(\Omega)$ for every
    $i = 1,\ldots,m$;
    \item[(ii)] $\bldu = \mathcal{I}(\bldu)+\mathcal{D}(\bldu)$, that is,
    $$u_i = \lambda_i\,\GG^i\big(\mathcal{F}_i(\bldu)\big)+\eta_i\,\gamma_i(\cdot)\,
    \BB_i[\bldu].$$
    \end{itemize}
    If, in addition, $u_i\geq 0$ for all $i = 1,\ldots,m$ and there exists
    some $i_0\in\{1,\ldots,m\}$ such that $u_{i_0}\not\equiv 0$, we say that
    $\mathbf{u}$ is a \emph{non-zero positive solution} of \eqref{eq.MainDirPB}.
   \end{definition}
   \begin{remark} \label{rem.regulsolution}
    On account of Remark \ref{rem.regulufzeta}, if $\zeta_1,\ldots,\zeta_m
    \in C^{\,0,\alpha}(\R^n)$
    for some $\alpha\in (0,1]$, we have that $\gamma_i = u_{\,0,\zeta_i}\in C^{\,0,\theta}(\R^n)$
    for all $i = 1,\ldots,m$,
    where 
    $$\theta = \min\{\alpha,s\}$$
    $s$ being as in \eqref{eq.defsmin}.
    As a consequence,
    we have $\mathcal{D}(\mathbb{X})\subseteq C^{\,0,\theta}(\R^n;\R^m)$ and, by
    \eqref{eq.KiXinX}, any solution of problem
    \eqref{eq.MainDirPB} actually belongs to $C^{\,0,\theta}(\R^n;\R^m)$.
   \end{remark}

   For our existence result, we make use of the classical fixed point index (see Section \ref{sec.index}).
   We will work on the cone 
   \begin{equation} \label{eq.defcon}
   P := \Big\{\mathbf{u}\in \mathbb{X}:\,\text{$u_i\geq 0$ on $\R^n$ for every $i = 1,\ldots,m$}\Big\}.
   \end{equation}
Given a finite sequence
$\varrho = \{\rho_i\}_{i=1}^m\subseteq (0,+\infty)$, we 
define
\begin{equation} \label{eq.defiIvarrho}
I(\varrho)=\prod_{i = 1}^m\,[0,\rho_k] \subseteq \R^m
\end{equation}
   and set 
   \begin{equation} \label{eq.defconrho}
   P(\varrho) := \Big\{\mathbf{u}\in \mathbb{X}:\,
   \text{$\mathbf{u} (x) \in I(\varrho)$ for all $x \in \R^n$}\Big\}.
   \end{equation}
  
 \begin{theorem} \label{thm.mainExistence}
Let the assumptions \emph{(H0)}-to-\emph{(H4)} be in force. Moreover, 
let us sup\-po\-se that there exists
a finite sequence
$\varrho = \{\rho_i\}_{i = 1}^m\subseteq(0,\infty)$ satisfying the following hypotheses:
\begin{itemize}
\item[\emph{(a)}] For every $i = 1,\ldots,m$, one has that
\begin{itemize}
 \item[$\mathrm{(a)}_1$] $\PP_i\big|_{P(\varrho)}$ is continuous, and there exist 
 $\underline\omega_{i,\varrho}, \overline\omega_{i,\varrho} \in \R$ such that
 $$
 \underline\omega_{i,\varrho} \leq \mathcal{P}_i[\mathbf{u}] \leq \overline\omega_{i,\varrho}, \quad \text{for every $\bldu \in P(\varrho)$};
 $$ 
 \item[$\mathrm{(a)}_2$] $f_i$ is continuous and non-negative on 
 $\overline \Omega\times I(\varrho)\times
 [\underline\omega_{i,\varrho}, \overline\omega_{i,\varrho}]$; \vspace*{0.05cm}
 \item[$\mathrm{(a)}_3$] $\BB_i\big|_{P(\varrho)}$ is continuous, non-negative, 
 and bounded;
\end{itemize}

\item[\emph{(b)}] There exist $\delta \in (0,+\infty)$, $i_0\in \{1,2,\ldots,{m}\}$ 
and $\rho_0 \in (0,\min_{i}{\rho_i}\big)$ 
such that, if $\varrho_0$ denotes the finite sequence 
$\varrho_0 := \{\rho_0\}_{i = 1}^m$, we have
\begin{equation} \label{eq.mainestimk0}
 f_{i_0}(x,\bldz,\omega)\ge \delta z_{i_0}\quad \text{for every $(x,\bldz,\omega)\in \Pi_0$},
\end{equation}
where $\Pi_{0}:=\overline \Omega \times I(\varrho_0)
\times [\underline\omega_{0}, \overline\omega_{0}]$ and
$$\underline\omega_{0}
 := \inf_{\bldu\in P(\varrho_0)}\PP_{i_0}[\mathbf{u}],\qquad
 \overline\omega_{0}:=\sup_{\bldu\in P(\varrho_0)}\PP_{i_0}[\mathbf{u}].$$

\item[\emph{(c)}] Setting, for every $i = 1,\ldots,m$,
\begin{equation} \label{eq.defiMkHk}
\begin{split}
 & M_{i}:=\max \Big\{f_{i}(x,\bldz,\omega): (x,\bldz,\omega)\in \overline \Omega\times I(\varrho)\times [\underline\omega_{i,\varrho}, \overline\omega_{i,\varrho}]\Big\}\quad \text{and}
\\
& B_i:=\sup_{\bldu\in {P(\varrho)}}\BB_i[\bldu]
\end{split}
\end{equation}
the following inequalities are satisfied: 
\begin{itemize}
 \item[$\mathrm{(c)}_1$] ${\mu_{i_{0}}}\leq \delta\lambda_{i_0}$; \vspace*{0.08cm}
 
 \item[$\mathrm{(c)}_2$] 
 $\lambda_i\,M_i\,\|\GG^i(\hat{1})\|_{\infty} + 
 \eta_i\,B_i\|\gamma_i\|_{\infty} \leq \rho_i$.
 \end{itemize}
\end{itemize}
Then the system~\eqref{eq.MainDirPB} has a non-zero positive solution $\bldu\in\mathbb{X}$ such that 
\begin{equation} \label{eq.estimnonzero}
\|\bldu\|_{\infty}\geq \rho_0\qquad
\text{and} \qquad \text{$\|u_i\|_{\infty}\leq \rho_i$ for every $i=1,\ldots,m$}.
\end{equation}
   \end{theorem}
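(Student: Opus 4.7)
The plan is to recast system \eqref{eq.MainDirPB} as the fixed-point equation $\bldu=\mathcal{T}(\bldu)$ on the cone $P$, where $\mathcal{T}:=\mathcal{I}+\mathcal{D}$ is built from \eqref{eq.defOpI}-\eqref{eq.defOpD}, and then apply Proposition \ref{propindex} to the pair $P(\varrho_0)\subset P(\varrho)$. First I would verify that $\mathcal{T}\colon P(\varrho)\to P$ is well defined, continuous and compact. Well-definedness uses (a): on the compact set $\overline{\Omega}\times I(\varrho)\times[\underline\omega_{i,\varrho},\overline\omega_{i,\varrho}]$ each $f_i$ is bounded, so $\mathcal{F}_i(\bldu)\in L^\infty(\Omega)$ whenever $\bldu\in P(\varrho)$. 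The inclusion $\mathcal{T}(P(\varrho))\subseteq P$ follows from the non-negativity of $\lambda_i,\eta_i,f_i,\BB_i$, from the positivity of $\GG^i$ (Proposition \ref{cor.PositiveGGs}), and from $\gamma_i\geq 0$, which is the weak maximum principle applied to \eqref{eq.defgammaiPb} with $\zeta_i\geq 0$. Continuity of $\mathcal{T}$ follows from the continuity of $\PP_i,\BB_i$ on $P(\varrho)$ (assumption (a)), together with Proposition \ref{prop.GGscont}, while compactness follows from Proposition \ref{prop.GGscompact} for the $\mathcal{I}$-part and from the fact that $\mathcal{D}(P(\varrho))$ lies in a bounded subset of the finite-dimensional span of $\gamma_1,\dots,\gamma_m$.

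Next I would compute $i_P(\mathcal{T},P(\varrho))=1$ via property (ii) of Proposition \ref{propindex}. Assume for contradiction that $\mathcal{T}(\bldu)=\sigma\bldu$ for some $\bldu\in\partial P(\varrho)$ and $\sigma>1$. Then $\|u_{i^*}\|_\infty=\rho_{i^*}$ for some index $i^*$. Since $0\leq \mathcal{F}_{i^*}(\bldu)\leq M_{i^*}$ a.e.\ on $\Omega$, the linearity and positivity of $\GG^{i^*}$ yield $\GG^{i^*}(\mathcal{F}_{i^*}(\bldu))\leq M_{i^*}\,\GG^{i^*}(\hat 1)$ pointwise, both sides being non-negative. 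Combined with $\BB_{i^*}[\bldu]\leq B_{i^*}$ and (c)$_2$, this gives
$$\sigma\rho_{i^*}=\sigma\|u_{i^*}\|_\infty\leq \lambda_{i^*}M_{i^*}\|\GG^{i^*}(\hat 1)\|_\infty+\eta_{i^*}B_{i^*}\|\gamma_{i^*}\|_\infty\leq\rho_{i^*},$$
contradicting $\sigma>1$.

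Next I would show $i_P(\mathcal{T},P(\varrho_0))=0$ via property (i), using the element $e\in P\setminus\{0\}$ whose $i_0$-th slot is the eigenfunction $\phi_{i_0}$ from Proposition \ref{prop.spectrumGGs} and whose other slots vanish. Suppose $\bldu=\mathcal{T}(\bldu)+\sigma e$ for some $\bldu\in\partial P(\varrho_0)$ and $\sigma>0$. Reading the $i_0$-th component, dropping the non-negative boundary piece, and invoking (b), I get $u_{i_0}\geq\lambda_{i_0}\delta\,\GG^{i_0}(u_{i_0}\big|_\Omega)+\sigma\phi_{i_0}$ pointwise on $\R^n$. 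An induction using the spectral identity $\mu_{i_0}\GG^{i_0}(\phi_{i_0}\big|_\Omega)=\phi_{i_0}$ from \eqref{def.phii} and the monotonicity of $\GG^{i_0}$ then yields, for every $n\in\N$,
$$u_{i_0}\geq\sigma\bigl(1+\kappa+\kappa^2+\cdots+\kappa^n\bigr)\phi_{i_0},\qquad \kappa:=\lambda_{i_0}\delta/\mu_{i_0}\geq 1,$$
the last inequality being exactly (c)$_1$. Evaluating at any $x_0\in\Omega$ with $\phi_{i_0}(x_0)>0$ (such a point exists because $\phi_{i_0}\not\equiv 0$ vanishes on $\R^n\setminus\Omega$) forces $u_{i_0}(x_0)=+\infty$, which contradicts $\bldu\in P(\varrho_0)$. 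Finally, since $\rho_0<\min_i\rho_i$ gives $\overline{P(\varrho_0)}\subset P(\varrho)$, Proposition \ref{propindex}(iii) produces a fixed point of $\mathcal{T}$ in $P(\varrho)\setminus\overline{P(\varrho_0)}$, which is the non-zero positive solution satisfying \eqref{eq.estimnonzero}.

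The main technical obstacle is the iterative comparison in the inner-set step: one has to weave together the linearity and positivity of $\GG^{i_0}$, the exact eigenrelation for $\phi_{i_0}$, and the threshold condition (c)$_1$, and then convert pointwise monotone inequalities on $\R^n$ into a genuine contradiction using only the information that $\bldu$ is bounded and continuous. Careful bookkeeping of where each inequality is applied (in $\Omega$ versus on the whole of $\R^n$) is also needed, because $\GG^{i_0}$ only sees the $\Omega$-restriction of its argument while the pointwise estimate must survive when evaluated at the distinguished point $x_0\in\Omega$ where $\phi_{i_0}$ is positive.
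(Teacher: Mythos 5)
Your proposal is correct and follows essentially the same path as the paper's proof: reformulate the system as $\bldu = \mathcal{T}(\bldu)$ with $\mathcal{T} = \mathcal{I} + \mathcal{D}$ compact and cone-preserving on $P(\varrho)$, compute the fixed-point index as $1$ on the outer set via Proposition~\ref{propindex}(ii), as $0$ on the inner set via (i) by iterating the eigenfunction estimate under $(\mathrm{c})_1$, and conclude by (iii). The only small point you leave implicit (and which the paper states explicitly) is the preliminary dichotomy that either $\mathcal{T}$ already has a fixed point on $\partial P(\varrho) \cup \partial P(\varrho_0)$ --- which is itself a solution satisfying \eqref{eq.estimnonzero} --- or the two indices are well-defined; the remaining variations (your $e$ supported only in the $i_0$-th slot, the geometric-series lower bound in place of the paper's linear-in-$p$ iterate) are purely cosmetic.
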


\begin{proof}
As a preliminary fact, we explicitly observe that 
$\mathcal{T} := \mathcal{I}+\mathcal{D}$
is \emph{a well-defined operator} from $P(\varrho)$ to $\mathbb{X}$.
For this purpose, we show that
\begin{equation} \label{eq.FFiLinf}
 \mathcal{F}_i\big(P(\varrho)\big)\subseteq L^\infty(\Omega)
 \qquad (\text{for every $i = 1,\ldots,m$}).
\end{equation}
In fact, given $\bldu \in P(\varrho)$, by assumption $\mathrm{(a)}_1$ we have, for every $i = 1,\ldots,m$,
$$(x,\bldu(x),\PP_i[\bldu])\in \overline{\Omega}\times I(\varrho)
\times[\underline\omega_{i,\varrho}, \overline\omega_{i,\varrho}]\quad 
\text{for all $x\in\overline{\Omega}$};$$
as a consequence, from assumption $\mathrm{(a)}_2$ we readily
derive \eqref{eq.FFiLinf}. Notice that $\mathcal{T}$ maps $P(\varrho)$ into
$\mathbb{X}$ in view of \eqref{eq.KiXinX} and the very definition of $\mathcal{D}$.
\medskip 

We now show that the following assertions hold:
\begin{itemize}
 \item[(i)] $\mathcal{T}$ maps $P(\varrho)$ into $P\subseteq\mathbb{X}$;
 \item[(ii)] $\mathcal{T}:P(\varrho)\to P$ is compact.
\end{itemize}
To prove (i), let $\bldu\in P(\varrho)$ and let
$i\in\{1,\ldots,m\}$ be fixed. 
We first observe that, since $\bldu\in P(\varrho)$, by
$(\mathrm{a})_3$ we have $\BB_i[\bldu]\geq 0$; moreover, since
$\zeta_i\geq 0$ on $\R^n$ (see assumption (H3)), from
 the Weak Maximum Principle we derive that
 $\gamma_i\geq 0$.
Thus, the fact that $\eta_i$ is nonnegative implies that
$$\mathcal{D}(\bldu)_i = \eta_i\,\gamma_i(\cdot)\, \BB_i[\bldu]\geq 0\quad \text{on $\R^n$}.$$
On the other hand, since $\bldu\in P(\varrho)$, by assumption
$(\mathrm{a})_2$ we also have
$$\mathcal{F}_i(\mathbf{u})
   = f_i(\cdot,\mathbf{u}(\cdot),\mathcal{P}_i[\mathbf{u}])\geq 0;$$
as a consequence, from Corollary \ref{cor.PositiveGGs} we infer that
$\mathcal{G}^i \big(\mathcal{F}_i(\mathbf{u})\big) \geq 0$ on $\R^n$,
and thus, as $\lambda_i \geq 0$, we get
\begin{equation*}
\mathcal{I}(\bldu)_i = \lambda_i\GG^i(\mathcal{F}_i(\bldu))\geq 0\quad \text{on $\R^n$}.
 \end{equation*}
 Gathering together these facts, and bearing in mind
 the very definition of $\mathcal{T}$, we conclude that 
 $\mathcal{T}(P(\varrho))\subseteq P$, as claimed. \vspace*{0.05cm}

 We now prove assertion (ii). To this end, we fix
 $i\in\{1,\ldots,m\}$ and we observe that, in view assumption $\mathrm{(a)_2}$,
 we have that
 $\mathcal{F}_i(\mathbf{u}): P(\varrho) \to L^\infty(\Omega)$ is continuous; moreover,
by Proposition \ref{prop.GGscompact} one also has that
 $$\mathcal{G}^i: L^\infty(\Omega) \to L^\infty(\R^n)$$
 is linear and compact.
As a consequence, we deduce that
 $$\mathcal{I} = (\lambda_i\,\GG^i\circ\mathcal{F}_i)_{i = 1,\ldots,m}:
 P(\varrho)\to L^\infty(\R^n;\R^m)$$
 is compact. On the other hand, since $\mathcal{D}$ is bounded
 on $P(\varrho)$ (as the same is true of $\BB_1,\ldots,\BB_m$,
 see assumption $\mathrm{(a)}_3$) and since, by definition,
 $$\mathcal{D}(P(\varrho))\subseteq\big\{t\boldsymbol{\gamma}:\,t\in\R\big\}
 \subseteq L^\infty(\R^n;\R^m)\qquad
 (\text{where $\boldsymbol{\gamma}:=(\gamma_1,\ldots,\gamma_m)$}),$$
 we immediately derive that $\mathcal{T} = \mathcal{I}+\mathcal{D}$ is compact
 from $P(\varrho)$ into $L^\infty(\R^n;\R^m)$.
 From this, since 
 $\mathcal{T}(P(\varrho))\subseteq P$ (by assertion (i)) and since
 $P$ is a closed subspace of $L^\infty(\R^n;\R^m)$, we conclude that 
 $\mathcal{T}$ is compact from $P(\varrho)$ into $P$.
 \medskip

 To proceed further, we define
 $$P_0=\Big\{\bldu\in \mathbb{X}:\, 
 \bldu(x) \in I(\varrho_0) \mbox{ for all  } x \in\R^n \Big\}\subseteq 
 P(\varrho)\subseteq 
 P,$$ 
 where $\varrho_0$ is as in assumption (b). Moreover, we consider the open sets
 \begin{align*}
  & D := \big\{\bldu\in\mathbb{X}:\,\text{$\|u_i\|_\infty<\rho_i$ for every
  $i = 1,\ldots,m$}\big\} \qquad\text{and} \\[0.1cm]
  & \qquad D^1 := \big\{\bldu\in\mathbb{X}:\,
  \text{$\|u_i\|_\infty < \rho_0$ for every $i = 1,\ldots,m$}\big\}.
 \end{align*}
 We explicitly observe that, if $D,\,D^1$ are as above, we have
 $\de(D\cap P) = \de P(\varrho)$ and
 $\de (D^1\cap P) = \de P_0$,
 where both the boundaries
 are  relative to $P$.\vspace*{0.05cm}
 
 Now, if
 the operator $\mathcal{T}$ has a fixed point  
 $\bldu_0\in \de P(\varrho)\cup\de P_0$ , then $\bldu_0$ is a solution
 of problem
 \eqref{eq.MainDirPB} satisfying \eqref{eq.estimnonzero}, and the theorem is proved.
 If, instead, $\mathcal{T}$
 is fixed-point free on $\de P(\varrho)\cup\de P_0$, both the fixed-point indices
 $$i_{{P}}(\mathcal{T}, D\cap P) \qquad\text{and}
 \qquad i_{{P}}(\mathcal{T}, D^1\cap P)$$
 are well-defined. Assuming this last possibility, we prove the following. \medskip
 
 \textsc{Claim 1.} We claim that 
 \begin{equation} \label{eq.toproveIndexone}
  i_{{P}}(\mathcal{T}, D\cap P) = 1.
  \end{equation}
 According to Proposition \ref{propindex}-(ii), to prove
 \eqref{eq.toproveIndexone} it suffices to show that
 \begin{equation} \label{eq.toprovenofixedpointBDone}
  \AA(\bldu) \neq \sigma\,\bldu \qquad\text{for every $\bldu\in\de P(\varrho)$
  and every $\sigma > 1$},
 \end{equation}
 To
 establish \eqref{eq.toprovenofixedpointBDone} we argue by contradiction,
 and we suppose that there exist 
 a function $\bldu\in \de P(\varrho)$ and a real $\sigma >1$ such that 
 $$\sigma\bldu= \mathcal{T}(\bldu).$$ 
 Since $\bldu\in \de P(\varrho)$, 
there exists an index $i\in \{1,\ldots,m\}$ such 
that $\| u_i\|_{\infty} = \rho_i$.
By
 assumption $\mathrm{(a)_1}$ and
 \eqref{eq.defiMkHk}, we have
 \begin{equation} \label{eq.touseWMP}
  0 \leq \mathcal{F}_{i}(\bldu)(x) = 
  f_i(x,\bldu(x),\mathcal{P}_i[\mathbf{u}]) \leq M_i \qquad\text{for all $x\in\overline \Omega$};
 \end{equation}
so that
\begin{equation} \label{idx1in}
\begin{split}
 \sigma u_i (x) & =
 \lambda_i\,\GG^i\big(\mathcal{F}_i(\bldu)\big)(x) +
  \eta_i\,\gamma_i(x)\,\BB_i[\bldu] \\
  & \leq \lambda_i\,\GG^i\big(M_i\hat{1}\big)(x) +
  \eta_i\,\gamma_i(x)\,\BB_i[\bldu] \\
&  \leq \big\|\lambda_i\,\GG^i\big(M_i\hat{1}\big)\big\|_{\infty}
+ \big\|\eta_i\,B_i\,\gamma_i\big\|_{\infty}  \\
& 
= \lambda_i\,M_i\,\big\|\GG^i(\hat{1})\|_{\infty} 
+ \eta_i\,B_i\,\|\gamma_i\|_{\infty} 
 \leq \rho_i
 \end{split}
\end{equation}
the last inequality following by assumption $\mathrm{(c)}_2$.
As a consequence, by taking the supremum for $x\in \overline \Omega$ in \eqref{idx1in},
as $\bldu\in \de P(\varrho)\subseteq P(\varrho)$, we get 
$$\sup_{x\in \overline \Omega}|\sigma\,u_i(x)| \leq \sigma\,\rho_i \leq \rho_i,$$ 
which is clearly a contradiction (since $\sigma > 1$), and the claim is proved. \medskip

\textsc{Claim 2.} We claim that
 \begin{equation} \label{eq.toproveIndexzero}
  i_{{P}}(\mathcal{T}, D^1\cap P) = 0.
  \end{equation}
 According to Proposition \ref{propindex}-(i), to prove
 \eqref{eq.toproveIndexzero} it suffices to show that
 there exists a suitable function $e\in P\setminus\{0\}$ satisfying the property
 \begin{equation} \label{eq.toprovenofixedpointBDzero}
  \mathcal{T}(\bldu) + \sigma e \neq \bldu \qquad\text{for every $\bldu\in\de P_0$
  and every $\sigma > 0$}.
 \end{equation}
 To
 establish \eqref{eq.toprovenofixedpointBDzero}, we let
 $e := (\phi_1,\ldots,\phi_m)$,
where each component  $\phi_1,\ldots,\phi_m$
 is as in \eqref{def.phii}, and we argue by contradiction: we thus
 suppose that there exist 
 $\bldu\in \partial P_0$ and $\sigma  >0$ such that
$$
\bldu= \mathcal{T}(\bldu) + \sigma e. 
$$

Let $i_0$ be as in assumption (b). Since $\mathcal{T}(\bldu)\in P$, we have
$$u_{i_0} = \mathcal{T}(\bldu)_{i_0} 
+ \sigma\,\phi_{i_0}\geq \sigma  \phi_{i_0} \qquad\text{on $\overline \Omega$}.$$
Furthermore, again by assumption (b), for
every $x\in\overline{\Omega}$ we get
\begin{equation} \label{eq.estimFk0varphi}
 \mathcal{F}_{i_0}(\bldu)(x) = f_{i_0}(x,\bldu(x),\mathcal{P}_{i_0}[\mathbf{u}])
  \geq \delta u_{i_0}(x)\geq \delta\sigma\phi_{i_0}(x).
\end{equation}
Gathering together all these facts, for every $x\in \overline \Omega$ we have
\begin{equation*}
\begin{split}
 u_{i_0}(x) & =
  \lambda_{i_0} \GG^{i_0}\big(\mathcal{F}_{i_0}(\bldu)\big)(x)
  +\eta_{i_0}\,\gamma_{i_0} (x)\,\BB_{i_0}[\bldu]  
  + \sigma\,\phi_{i_0} (x) \\
 & \geq 
\lambda_{i_0}\,\GG^{i_0}(\delta\sigma\phi_{i_0})(x) +
\sigma\phi_{i_0} (x) \\
& =  \frac{\delta\lambda_{i_0}}{\mu_{i_0}}\cdot\sigma\phi_{i_0}(x)
 + \sigma\phi_{i_0} (x)  \geq 2\sigma  \phi_{i_0} (x)
\end{split}
\end{equation*}
the last inequality following by assumption $\mathrm{(c)}_1$.

By iterating the above argument, for every $x\in \overline \Omega$ we get
$$
u_{i_0} (x)\geq p\sigma\phi_{i_0}(x) \qquad \text{for every $p\in\mathbb{N}$},
$$
a contradiction since $u_{i_0}$ is bounded. \medskip

\noindent We are now ready to conclude the proof of the theorem: in fact,
by combining Claims 1 and 2 and Proposition
\ref{propindex}-(iii), we infer the existence of a fixed point 
$$\bldu_0\in \big(D\cap P\big) \setminus P_0$$ 
of $\mathcal{T}$; thus, $\bldu_0$ is a solution of
\eqref{eq.MainDirPB} satisfying \eqref{eq.estimnonzero}.
\end{proof}

An elementary argument yields the following non-existence result.

\begin{theorem}\label{thmnonex} 
Let the assumptions \emph{(H0)}-to-\emph{(H4)} be in force. Moreover, 
let us suppose that there exists
a finite sequence
$\varrho = \{\rho_i\}_{i = 1}^m\subseteq(0,\infty)$
such that, for every $i = 1,\ldots,m$, the following conditions hold:
\begin{itemize}
\item[\emph{(a)}] 
there exist $\underline{\omega}_{i,\varrho},\,\overline{\omega}_{i,\varrho}\in\R$ such that
\begin{equation}
 \underline{\omega}_{i,\varrho} \leq \PP_i[\bldu]\leq \overline{\omega}_{i,\varrho}
 \quad\text{for every $\bldu\in P(\varrho)$};
\end{equation} 
\item[\emph{(b)}] there exist $\tau_i \in (0,+\infty)$
such that
$$f_{i}(x,\bldz,\omega)\le \tau_i z_i\quad \text{for every $(x,\bldz,\omega)\in 
\overline \Omega\times I(\varrho)\times [
\underline{\omega}_{i,\varrho},\overline{\omega}_{i,\varrho}]$},$$ 
\item[\emph{(c)}] there exist $\xi_i \in (0,+\infty)$
such that
$$\big|\BB_{i}[\bldu]\big| \le \xi_i \cdot \|\bldu\|_\infty, \quad \text{for every $u\in 
P(\varrho)$},$$ 
\item[\emph{(d)}] the following inequality holds: 
\begin{equation} \label{eq.estimleq1absurd}
 \lambda_i\tau_i\,\|\GG^i(\hat{1})\|_{\infty} + 
 \eta_i\,\xi_i\|\gamma_i\|_{\infty} <1.
 \end{equation}
\end{itemize}

Then the system~\eqref{eq.MainDirPB} has at most the zero solution in $P(\varrho)$.
\end{theorem}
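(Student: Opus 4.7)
The plan is to show directly that any $\bldu\in P(\varrho)$ satisfying \eqref{eq.MainDirPB} must obey a contraction-type inequality $\|\bldu\|_\infty\le K\|\bldu\|_\infty$ with $K<1$, which forces $\bldu\equiv 0$. I would start from the integral representation furnished by Definition~\ref{def.solutionPB}:
\[
 u_i(x)=\lambda_i\,\GG^i(\mathcal{F}_i(\bldu))(x)+\eta_i\,\gamma_i(x)\,\BB_i[\bldu]\qquad(x\in\R^n,\ i=1,\ldots,m),
\]
and estimate the two summands separately, uniformly for $\bldu\in P(\varrho)$.

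For the \emph{bulk} term, assumption (a) ensures that $(x,\bldu(x),\PP_i[\bldu])\in\overline\Omega\times I(\varrho)\times[\underline\omega_{i,\varrho},\overline\omega_{i,\varrho}]$ whenever $\bldu\in P(\varrho)$, so (b) gives $\mathcal{F}_i(\bldu)(x)\le\tau_i u_i(x)\le\tau_i\|u_i\|_\infty$ on $\overline\Omega$. Combining linearity of $\GG^i$ with the positivity provided by Corollary~\ref{cor.PositiveGGs}, I then obtain the pointwise bound
\[
 \lambda_i\,\GG^i(\mathcal{F}_i(\bldu))(x)\;\le\;\lambda_i\,\tau_i\,\|u_i\|_\infty\,\GG^i(\hat{1})(x)\;\le\;\lambda_i\,\tau_i\,\|\GG^i(\hat{1})\|_\infty\,\|\bldu\|_\infty.
\]
For the \emph{boundary} term, (c) yields $|\BB_i[\bldu]|\le\xi_i\|\bldu\|_\infty$; moreover, since $\zeta_i\ge 0$ by (H3), the Weak Maximum Principle applied to problem~\eqref{eq.defgammaiPb} gives $\gamma_i\ge 0$, whence $\eta_i\gamma_i(x)\BB_i[\bldu]\le\eta_i\xi_i\|\gamma_i\|_\infty\|\bldu\|_\infty$ regardless of the sign of $\BB_i[\bldu]$ (when it is negative, the boundary contribution is itself non-positive and the estimate is automatic).

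Adding the two bounds and using $u_i\ge 0$, I deduce
\[
 0\;\le\;u_i(x)\;\le\;\bigl(\lambda_i\,\tau_i\,\|\GG^i(\hat{1})\|_\infty+\eta_i\,\xi_i\,\|\gamma_i\|_\infty\bigr)\,\|\bldu\|_\infty\qquad(x\in\R^n).
\]
Taking the supremum in $x$, then the maximum over $i=1,\ldots,m$, and invoking hypothesis (d) gives $\|\bldu\|_\infty\le K\|\bldu\|_\infty$ with $K:=\max_i\bigl(\lambda_i\tau_i\|\GG^i(\hat{1})\|_\infty+\eta_i\xi_i\|\gamma_i\|_\infty\bigr)<1$, which forces $\|\bldu\|_\infty=0$. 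There is no serious obstacle here; the only point requiring a little care is to verify that the pointwise estimate holds on all of $\R^n$ (not just on $\Omega$), which is immediate on $\R^n\setminus\Omega$ since the Green-operator contribution vanishes there and only the boundary term, already controlled, survives.
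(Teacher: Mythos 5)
Your argument is correct and is essentially the same as the paper's: both derive the pointwise bound $u_i(x)\le\bigl(\lambda_i\tau_i\|\GG^i(\hat 1)\|_\infty+\eta_i\xi_i\|\gamma_i\|_\infty\bigr)\|\bldu\|_\infty$ from assumptions (a)--(c), the positivity/linearity of $\GG^i$, and the weak maximum principle for $\gamma_i$, then invoke (d) to force $\|\bldu\|_\infty=0$. The only cosmetic difference is that the paper fixes an index $j$ attaining $\|u_j\|_\infty=\|\bldu\|_\infty$ at the outset and works with that single component, whereas you estimate all components and take the maximum at the end; the two are logically equivalent.
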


\begin{proof}
 By contradiction, assume that \eqref{eq.MainDirPB}
 has a solution $\bldu\in P(\varrho)\setminus\{0\}$, that is, 
 for every $i = 1,\ldots,m$ we have (see Definition \ref{def.solutionPB}):
 $$\mathcal{F}_i(\bldu)\in L^\infty(\Omega)\quad\text{and}\quad
 u_i = \lambda_i\,\GG^i\big(\mathcal{F}_i(\bldu)\big)+
  \eta_i\,\gamma_i(\cdot)\BB_i[\bldu].$$
 Setting $\rho := \|\bldu\|_\infty > 0$, we let 
 $j\in \{1,2,\ldots,m\}$ be such that  
 \begin{equation} \label{eq.choicejabsurd}
 \|u_{j}\|_{\infty}=\rho.
 \end{equation}
 In view of assumptions (a)-(b), for every $x\in\overline \Omega$ we then have 
 \begin{equation} \label{eq.touseabsurd}
  \mathcal{F}_{j}(\bldu)(x) = 
  f_j(x,\bldu(x),\mathcal{P}_j[\mathbf{u}]) \leq \tau_j u_j(x) \leq \tau_j \rho,
 \end{equation}
 and thus (see Corollary \ref{cor.PositiveGGs}, and recalling that
 $\mathcal{F}_j(\bldu)\in L^\infty(\Omega)$)
 $$\GG^j\big(\tau_j\rho\cdot\hat{1}-\mathcal{F}_j(\bldu)\big)\geq 0\,\,
 \Longleftrightarrow\,\,\GG^{j}(\mathcal{F}_j(\bldu))
 \leq \tau_j\rho\,\GG^j(\hat{1})\qquad \text{on $\R^n$}.$$
 As a consequence, we obtain
\begin{equation} \label{nonext}
\begin{split}
u_{j}(x) & =
 \lambda_j\,\GG^j\big(\mathcal{F}_j(\bldu)\big)(x) +
  \eta_j\,\gamma_j(x)\,\BB_j[\bldu] \\ 
  & \leq \lambda_j\tau_j\rho\,\GG^j(\hat{1})(x) +
  \eta_j\,\gamma_j(x)\,\BB_j[\bldu] \\
  & (\text{by assumption (c) and \eqref{eq.choicejabsurd}}) \\
  & \leq \|\lambda_j\tau_j\rho\,\GG^j(\hat{1})\|_{\infty}
+ \big\|\eta_j\,\xi_j \rho\,\gamma_j\big\|_{\infty}  \\
& = \big(\lambda_j\,\tau_j\,\big\|\GG^j(\hat{1})\|_{\infty} 
+ \eta_j\,\xi_j\,\|\gamma_j\|_{\infty}\big) \rho.
\end{split}
\end{equation}
By taking the supremum in~\eqref{nonext} for $x\in \overline \Omega$, 
from \eqref{eq.estimleq1absurd} and \eqref{eq.choicejabsurd} 
we get
$$\rho = \sup_{x\in\overline \Omega}u_j(x) \leq 
\left(\lambda_j\,\tau_j\,\big\|\GG^j(\hat{1})\|_{\infty} 
+ \eta_j\,\xi_j\,\|\gamma_j\|_{\infty}\right)\rho < \rho,$$
and this is clearly a contradiction. Thus, we conclude that
problem \eqref{eq.MainDirPB}
cannot have nonzero solutions  in $P(\varrho)$, and the proof
is complete.
\end{proof}

\section{Examples} \label{sec.examples}
  In this last section we present a couple of concrete examples
  illustrating the applicability of 
  Theorems \ref{thm.mainExistence} and \ref{thmnonex}. 
  Before proceeding we remind the following result,
  which shall play a key role in our computations.
  \begin{lemma} \label{lem.Solutiondir}
   Let $r > 0$ be fixed, and let $B_r\subseteq\R^n$ be the Euclidean
   ball centered at $0$ with radius $r$. Moreover,
   let $s\in (0,1)$. Then, the unique solution $v_s$ of
   $$\begin{cases}
      (-\Delta)^s v = 1 & \text{in $B_r$}, \\
      v\equiv 0 & \text{on $\R^n\setminus B_r$}
      \end{cases}$$
      has the following explicit expression
      \begin{equation} \label{eq.explicitvsLemma}
      v_s(x)
      = \frac{2^{-2s}\Gamma(n/2)}{\Gamma(\frac{n+2s}{2})\,\Gamma(1+s)}\,(r^2-\|x\|^2)^s_+,
      \end{equation}
      where $\|\cdot\|$ stands for
      the usual Euclidean norm and
      $$\Gamma(\alpha) = \int_{\R}x^{\alpha-1}\,e^{-x}\,\d x\qquad(\alpha > 0).$$
  \end{lemma}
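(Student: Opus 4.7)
The plan is to treat this as a verification-plus-uniqueness argument, since uniqueness is already handed to us by Theorem \ref{thm.RosSerra} applied with $f\equiv 1\in L^\infty(B_r)$. So the task reduces to checking that the function displayed in \eqref{eq.explicitvsLemma} is indeed a weak solution in $H^s(\R^n)\cap C^{\,0,s}(\R^n)$ of the homogeneous Dirichlet problem on $B_r$.

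First I would dispose of the regularity required by the Ros-Oton--Serra framework: the function $x\mapsto (r^2-\|x\|^2)_+^s$ vanishes on $\R^n\setminus B_r$, is globally H\"older continuous of order $s$ (this is elementary from the identity $a^s-b^s\le (a-b)^s$ for $0\le b\le a$ with $s\in(0,1)$), and lies in $H^s(\R^n)$ since it is bounded, compactly supported, and $C^{\,0,s}$. Next, by the scaling relation $(-\Delta)^s\bigl[u(\cdot/r)\bigr](x)=r^{-2s}\bigl((-\Delta)^s u\bigr)(x/r)$, I would rescale and reduce the problem to the unit ball: introducing $w(y):=r^{-2s}\,v_s(ry)$, it suffices to show that
\[
w(y)=\kappa_{n,s}\,(1-\|y\|^2)_+^s,\qquad \kappa_{n,s}:=\frac{2^{-2s}\Gamma(n/2)}{\Gamma(\tfrac{n+2s}{2})\,\Gamma(1+s)},
\]
satisfies $(-\Delta)^s w\equiv 1$ pointwise on $B_1$, which in turn will give the weak formulation via integration by parts (as in \eqref{eq.intbypars}).

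The core of the proof is therefore the classical identity
\[
(-\Delta)^s\bigl[(1-\|y\|^2)_+^s\bigr](y)=\frac{1}{\kappa_{n,s}}\qquad\text{for every } y\in B_1,
\]
due originally to Getoor. To carry this out I would first evaluate the singular integral defining $(-\Delta)^s$ at $y=0$ using spherical coordinates: radial symmetry collapses the computation to a one-dimensional integral of the form $\int_0^\infty \rho^{-1-2s}\bigl(1-(1-\rho^2)_+^s\bigr)\,\d\rho$, which can be split at $\rho=1$ and reduced, via the substitution $t=\rho^2$ on the inner piece and an Euler-type substitution on the outer piece, to a combination of Beta integrals; expressing $B(\cdot,\cdot)$ through Gamma functions and using the duplication formula $\Gamma(2s)=\pi^{-1/2}2^{2s-1}\Gamma(s)\Gamma(s+\tfrac12)$ together with the normalizing constant $c_{n,s}$ would pin down exactly $1/\kappa_{n,s}$. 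Then, to pass from $y=0$ to arbitrary $y\in B_1$, I would invoke the well-known conformal/M\"obius invariance of the fractional Laplacian on the ball (a radial M\"obius transform that fixes $\partial B_1$ sends $y$ to $0$ and preserves $(1-\|\cdot\|^2)_+^s$ up to a Jacobian factor that cancels against the conformal weight of $(-\Delta)^s$), which shows the value is independent of $y\in B_1$.

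The main obstacle is the explicit evaluation of the one-dimensional Beta-type integral above and the bookkeeping of the Gamma factors against $c_{n,s}$; the calculation is entirely elementary but delicate enough that in practice I would simply quote Getoor's classical result (cf.\ also the treatment by Bucur--Valdinoci and by Dyda), noting that uniqueness from Theorem \ref{thm.RosSerra} then closes the argument with no further work.
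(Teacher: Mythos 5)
Your overall plan — verify the candidate formula and appeal to uniqueness via Theorem~\ref{thm.RosSerra} — and your ultimate recourse to Getoor are exactly what the paper does: the paper gives no proof and simply points to~\cite{BGR,Get}, so on the level of what is actually written, your proposal matches.

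However, the optional verification sketch contains a step that would not go through as stated. You propose to compute $(-\Delta)^s\bigl[(1-\|y\|^2)_+^s\bigr]$ only at $y=0$ and then propagate to all of $B_1$ by a M\"obius transform of the ball, claiming that the Jacobian factor picked up by $(1-\|y\|^2)^s$ ``cancels against the conformal weight of $(-\Delta)^s$.'' This cancellation does not occur. Under a M\"obius transform $\psi$ preserving $B_1$ one has the identity $1-\|\psi(y)\|^2 = |J_\psi(y)|^{1/n}\,(1-\|y\|^2)$, so that $(1-\|\psi(y)\|^2)^s = |J_\psi(y)|^{s/n}(1-\|y\|^2)^s$; the exponent is $s/n$. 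On the other hand, the conformal covariance of $(-\Delta)^s$ reads
\[
(-\Delta)^s\Bigl[|J_\psi|^{\frac{n-2s}{2n}}(u\circ\psi)\Bigr]
= |J_\psi|^{\frac{n+2s}{2n}}\bigl((-\Delta)^s u\bigr)\circ\psi,
\]
so the relevant incoming weight is $\frac{n-2s}{2n}$, not $\frac{s}{n}$. These two exponents agree only when $s=n/4$. Generically, conjugating $(1-\|y\|^2)^s$ under the M\"obius group leaves an uncancelled factor $|J_\psi|^{1/2}$, and the value of $(-\Delta)^s\bigl[(1-\|y\|^2)^s\bigr]$ cannot be shown constant this way. (The functions that \emph{do} transform with the right weight $\frac{n-2s}{2n}$ are the critical ``bubbles'' $(1-\|y\|^2)^{(n-2s)/2}$, not the torsion profile.) To verify the formula analytically one should instead use a direct Hankel/Fourier computation, Dyda's hypergeometric identities for $(-\Delta)^s(1-\|y\|^2)_+^p$, or Getoor's original probabilistic argument; since you (and the paper) in the end simply quote~\cite{Get,BGR}, the conclusion stands, but the M\"obius shortcut should be dropped.
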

  \noindent For a proof of Lemma \ref{lem.Solutiondir} we refer, e.g., to \cite{BGR, Get}.  
  \begin{example} \label{exm.existence}
   In Euclidean space $\R^2$, let us consider the following BVP
   \begin{equation} \label{eq.PBexmesit}
    \begin{cases}
    (-\Delta)^{\frac{1}{4}}u_1 = \lambda_1(1-u_1)\,\int_{B_1}e^{u_2}\,\d x & \text{in $B_1$}, \\
    (-\Delta)^{\frac{3}{4}}u_2 = \lambda_2 u_2\cdot\mathrm{osc}_{B_1}(u_1) & \text{in $B_1$}, \\
    u_1\big|_{\R^2\setminus B_1} = \eta_1\cdot u_1(0)u_2(0), \\
    u_2\big|_{\R^2\setminus B_1} = \eta_2\cdot\limsup\limits_{|x|\to\infty}u_1(x),
    \end{cases}
   \end{equation}
   where $B_1$ is the Euclidean ball centered at $0$ with radius $1$, and
   \begin{equation} \label{eq.defOsciexm}
   \mathrm{osc}_{B_1}(\phi) := \sup_{B_1}(\phi)-\inf_{B_1}(\phi)\qquad
   (\text{for all $\phi\in\mathbb{X}$}).
   \end{equation}
   Clearly, problem \eqref{eq.PBexmesit} is of the form \eqref{eq.MainDirPB}, with
   \begin{enumerate}
    \item $\Omega = B_1$, $m = 2$, $s_1 = 1/4$ and $s_2 = 3/4$;
    \item $f_1:\overline{B}_1\times\R^2\times\R,\quad 
    f_1(x,\bldz,w) := (1-z_1)\,w$;
    \item $f_2:\overline{B}_1\times\R^2\times\R,\quad 
    f_2(x,\bldz,w) := z_2\,w$;
    \item $\PP_1:\mathbb{X}\to\R,\quad \PP_1[\bldu] := \int_{B_1}e^{u_2}\,\d x$;
    \item  $\PP_2: \mathbb{X}\to\R,\quad \PP_2[\bldu] := \mathrm{osc}_{B_1}(u_1)$;
    \item  $\BB_1:\mathbb{X}\to\R,\quad \BB_1[\bldu] := u_1(0)u_2(0)$;
    \item  $\BB_2:\mathbb{X}\to\R,\quad \BB_1[\bldu] := \limsup_{|x|\to\infty}u_1(x)$;
    \item  $\zeta_1\equiv\zeta_2\equiv 1$.
   \end{enumerate}
   Moreover, it is straightforward to recognize that
   all the `structural' assumptions (H0)-to-(H4) listed
   at the beginning of Section \ref{sec.existence} are fulfilled.
  We now turn to prove that, in this case,
   also assumptions (a)-to-(c) of Theorem \ref{thm.mainExistence}
   are satisfied
   for a suitable choice of the nonnegative parameters $\lambda_1,\lambda_2,\eta_1,\eta_2$. \vspace*{0.05cm}
   
   To this end, we consider the (finite) sequence $\varrho$ defined as follows:
   \begin{equation} \label{eq.defrhoExmexist}
    \varrho = \{\rho_1,\rho_2\},\qquad
    \text{where $\rho_1 := \frac{1}{2}$ and $\rho_2 := 1$}.
   \end{equation}
   According to this choice of $\varrho$, we have
   (see \eqref{eq.defiIvarrho}-\eqref{eq.defconrho})
   $$I(\varrho) = [0,1/2]\times[0,1],\qquad
   P(\varrho) = \big\{u\in\mathbb{X}:\,\text{$\bldu(x)\in I(\varrho)$ for
   all $x\in\R^2$}\big\}.$$
   
   \noindent\textbf{Assumption (a).} 
   First of all, it is easy to see that $\PP_1,\PP_2$ are continuous when restricted to
   the set $P(\varrho)\subseteq\mathbb{X}$; moreover,
   for every $\bldu = (u_1,u_2)\in\mathbb{X}$ we have
   \begin{align*}
    & \pi\leq\PP_1[\bldu] = \int_{B_1}e^{u_2}\,\d x\leq \pi\cdot e \quad\text{and}
    \quad
    0\leq \PP_2[\bldu] = \mathrm{osc}(u_1) \leq \frac{1}{2},
   \end{align*}
   so that
   assumption $\mathrm{(a)_1}$ is satisfied with the choices
   \begin{equation} \label{eq.choiceomega}
   \underline{\omega}_{1,\varrho} := \pi,\quad
   \overline{\omega}_{1,\varrho}
    := \pi\cdot e,\quad 
   \underline{\omega}_{2,\varrho} := 0,\quad \overline{\omega}_{2,\varrho}
    := 1/2.
    \end{equation}
    As regards assumption $\mathrm{(a)_2}$, we first notice
    that $f_1,f_2\in C(\overline{B}_1\times\R^2\times\R)$;
    moreover, by taking into account \eqref{eq.defrhoExmexist} and
    \eqref{eq.choiceomega}, we get
    \begin{align*}
     & \text{$f_1(x,\bldz,w) = (1-z_1)\,w \geq \frac{\pi}{2} > 0$\,\,
     on $\overline{B_1}\times I(\varrho)\times [\pi,\pi\cdot e]$}
     \qquad\text{and} \\[0.05cm]
     & \qquad \text{$f_2(x,\bldz,w) = z_2\,w \geq 0$\,\,
     on $\overline{B_1}\times I(\varrho)\times [0,1/2]$},
     \end{align*}
     so that
     $\mathrm{(a)_2}$ is fulfilled.
     Finally, as regards assumption $\mathrm{(a)_3}$, it is not difficult
     to check that $\BB_1,\BB_2$ are continuous and non-negative
     when restricted to $P(\varrho)$; moreover, since
     for every $\bldu = (u_1,u_2)\in P(\varrho)$ we have
     \begin{align*}
     |\BB_1[\bldu]| = |u_1(0)u_2(0)| \leq \frac{1}{2}\quad
     \text{and}\quad
     |\BB_2[\bldu]| = |\limsup_{|x|\to\infty}u_1(x)|\leq \frac{1}{2}
     \end{align*}
     we conclude that $\BB_1,\BB_2$ are bounded on $P(\varrho)$.
     \vspace*{0.05cm}
     
     \noindent\textbf{Assumption (b).} 
      First of all, if $\rho_0\in(0,1/2)$ is arbitrarily fixed, we have
      $$\underline{\omega}_0 := \inf_{\bldu \in P(\varrho_0)}\PP_1[\bldu]
      = \pi,$$
      where $\varrho_0 := \{\rho_0, \rho_0\}$;
      moreover, for every $(x,\bldz)\in\overline{B}_1\times I(\varrho_0)$
      and every $w\geq \pi$, one has
      \begin{align*}
       f_1(x,\bldz, w) = (1-z_1)\,w \geq \frac{\pi}{2}.
      \end{align*}
      Gathering together these facts, we easily
      conclude that \eqref{eq.mainestimk0} holds
      \emph{for every choice of $\delta > 0$}. In fact, given
      any such $\delta$, we define
      \begin{equation} \label{eq.choicerho0}
      \rho_0 = \rho_0(\delta) := \min\Big\{\frac{1}{4},\frac{\pi}{2\delta}\Big\}
      \in (0,1/2);
      \end{equation}
      then, for every $(x,\bldz)\in \overline{B}_1\times I(\varrho_0)$
      and every $w\geq \pi = \underline{\omega}_0$, we get
      $$f_1(x,\bldz,w)\geq \frac{\pi}{2\delta}\cdot\delta
      \geq \delta z_1,$$
      and thus assumption (b) is satisfied with $i_0 = 1$
      (and for every $\delta > 0$).
      \vspace*{0.05cm}
      
      \noindent\textbf{Assumption (c).} We start by
      computing the constants appearing in 
      \eqref{eq.defiMkHk}. First of all, using
      \eqref{eq.defrhoExmexist}, \eqref{eq.choiceomega} and
      the definition of $f_1,f_2$ we get
      \begin{equation} \label{eq.explicitM1M2}
      \begin{split}
       & M_1 = \sup\big\{f_1(x,\bldz,w):\,
       (x,\bldz,w)\in\overline{B}_1\times I(\varrho)\times [\underline{\omega}_{1,\varrho},
       \overline{\omega}_{1,\varrho}]\big\} = \pi\cdot e\quad\text{and}
       \\
       &  
       M_2 = \sup\big\{f_2(x,\bldz,w):\,
       (x,\bldz,w)\in\overline{B}_1\times I(\varrho)\times [\underline{\omega}_{2,\varrho},
       \overline{\omega}_{2,\varrho}]\big\} = \frac{1}{2}.
      \end{split}
      \end{equation}
      Moreover, again by 
      \eqref{eq.defrhoExmexist}, \eqref{eq.choiceomega} and
      the definition of $\BB_1,\BB_2$ we get
      \begin{equation} \label{eq.explicitB1B2}
       B_1 = \sup_{\bldu\in P(\varrho)}
       \BB_1[\bldu] = \frac{1}{2}\quad\text{and}\quad
       B_2 = \sup_{\bldu\in P(\varrho)}
       \BB_2[\bldu]=\frac{1}{2}
      \end{equation}
      We then turn our attention to the functions $\GG^i(\hat{1})
      = \GG_{s_i}(\hat{1})$
      and $\gamma_i$ (for $i = 1,2$).
      To begin with, according to the very definition
      of $(-\Delta)^{s_i}$-Green operator, we know that
      $\GG^i(\hat{1})$ is the unique solution in $C^{0,s_i}(\R^2)$ of
      $$
       \begin{cases}
      (-\Delta)^{s_i} v = 1 & \text{in $B_1$}, \\
      v\equiv 0 & \text{on $\R^2\setminus B_1$}.
      \end{cases}$$
      On the other hand, thanks to Lemma \ref{lem.Solutiondir} we
      can write \emph{the explicit expression}
      of $\GG^i(\hat{1})$: in fact, we have
      (remind that $n = 2, r = 1, s_1 = 1/4$ and $s_2 = 3/4$)
      \begin{align*}
       & \GG^1(\hat{1}) = \GG_{1/4}(\hat{1})
       = \Big(\frac{2^{-1/4}}{\Gamma(5/4)}\Big)^2\,(1-\|x\|^2)^{1/4}_+
       \qquad\text{and} \\
       & \qquad 
       \GG^2(\hat{1}) = \GG_{3/4}(\hat{1})
       = \Big(\frac{2^{-3/4}}{\Gamma(7/4)}\Big)^2\,(1-\|x\|^2)^{3/4}_+.
      \end{align*}
      As a consequence, we obtain
      \begin{equation} \label{eq.estimGGi}
       \begin{split}
       &  \|\GG^1(\hat{1})\|_\infty
       = \frac{1}{\sqrt{2}\,\Gamma^2(5/4)}\thickapprox 0.860682 \qquad\text{and} \\
       & \qquad 
       \|\GG^2(\hat{1})\|_\infty
       = \frac{1}{\sqrt{8}\,\Gamma^2(7/4)}\thickapprox 0.418567.
       \end{split}
      \end{equation}
	 As for the functions $\gamma_i$, the computations are much more easier:
	 first of all, since $\zeta_1\equiv\zeta_2\equiv 1$, we know from 
	 \eqref{eq.defgammaiPb} that $\gamma_i$ is the unique solution of
	 $$
	 \begin{cases}
	 (-\Delta)^{s_i}v = 0 & \text{in $B_1$}, \\
	 u \equiv 1 & \text{on $\R^2\setminus B_1$}.
	 \end{cases}
	 $$
	 On the other hand, since the above problem is solved by the constant
	 function $\hat\gamma \equiv 1$ (independently of $s_i$), we get
	 $\gamma_1 \equiv \gamma_2 \equiv 1$. Hence, we have
	 \begin{equation} \label{eq.estimgammai}
	 \|\gamma_1\|_\infty = \|\gamma_2\|_\infty = 1.
	 \end{equation}
	 Gathering together \emph{all the facts established so far},
	 we are finally in a position to apply Theorem \ref{thm.mainExistence}:
	 taking into account \eqref{eq.defrhoExmexist}, 
	 \eqref{eq.explicitM1M2}, 
	 \eqref{eq.explicitB1B2}, \eqref{eq.estimGGi} and 
	 \eqref{eq.estimgammai}, 
	 for every choice of parameters $\lambda_1,\lambda_2,\eta_1,\eta_2 \geq 0$ satisfying
	 \begin{equation} \label{eq.choiceParamExm1}
	 \begin{split}
	  \lambda_1\,\frac{\pi\cdot e}{\sqrt{2}\,\Gamma^2(5/4)}
	  + \frac{\eta_1}{2} \leq \frac{1}{2} \qquad & (\text{see assumption $\mathrm{(c)}_2$
	  with $i = 1$})\\
	  \frac{\lambda_2}{2}\,\frac{1}{\sqrt{8}\,\Gamma^2(7/4)}
	  + \frac{\eta_2}{2}\leq 1 \qquad & (\text{see assumption $\mathrm{(c)}_2$
	  with $i = 2$})\\[0.1cm]
	  \lambda_1 > 0 \qquad & (\text{see assumption $\mathrm{(c)}_1$
	  with $i_0 = 1$})
	 \end{split}
	 \end{equation}
	 there exists a solution $\bldu_0\in C^{0,1/4}(\R^2)$ of \eqref{eq.PBexmesit}, further
	 satisfying
	 $$\|\bldu\|_\infty \geq \rho_0(\delta)\qquad\text{and}\qquad
	 \|u_1\|_\infty\leq \frac{1}{2},\,\,\|u_2\|_\infty\leq 1.$$
	 Here, $\rho(\delta) \in (0,1/2)$ is as in \eqref{eq.choicerho0} and $\delta > 0$
	 is chosen in such a way that 
	 \begin{equation}\label{eq.choicedelta}
	 \delta\lambda_1 \geq \mu_1,
	 \end{equation}
	 where $\mu_1$ is the inverse of the spectral radius of $(-\Delta)^{1/4}$.
	 More explicitly, given $\lambda_1,\lambda_2,\eta_1,\eta_2$ satisfying 
	 \eqref{eq.choiceParamExm1}, one \emph{first chooses $\delta > 0$} in such a way that
	 $\delta\lambda_1\geq \mu_1$ (see assumption $\mathrm{(c)}_1$); \emph{then},
	 one lets $\rho_0 = \rho_0(\delta)$ be as in \eqref{eq.choicerho0}.
	 
	 The key point in this argument is that, since \eqref{eq.mainestimk0} holds
      \emph{for every $\delta > 0$} (by accordingly choosing $\rho_0$), 
      one is free to choose $\delta > 0$ in such a way that 
      \eqref{eq.choicedelta} holds (provided
      that $\lambda_1 > 0$), without the need of an explicit knowledge of $\mu_1$.
  \end{example}  
  \begin{example} \label{exm.NONexist}
  In Euclidean space $\R^2$, we consider the following BVP
  \begin{equation} \label{eq.PBexmNONexist}
    \begin{cases}
    (-\Delta)^{\frac{1}{4}}u_1 = \lambda_1u_1^2(1-u_1)\,\int_{B_1}e^{u_2}\,\d x & \text{in $B_1$}, \\
    (-\Delta)^{\frac{3}{4}}u_2 = \lambda_2 u_2\cdot\mathrm{osc}_{B_1}(u_1) & \text{in $B_1$}, \\
    u_1\big|_{\R^2\setminus B_1} = \eta_1\cdot u_1(0)u_2(0), \\
    u_2\big|_{\R^2\setminus B_1} = \eta_2\cdot\limsup\limits_{|x|\to\infty}u_1(x),
    \end{cases}
   \end{equation}
   where 
   $B_1$ is the Euclidean unit ball, and
   $\mathrm{osc}_{B_1}(\cdot)$ is as in \eqref{eq.defOsciexm}.
   Clearly, problem \eqref{eq.PBexmNONexist} is of the form \eqref{eq.MainDirPB}, with
   \begin{enumerate}
    \item $\Omega = B_1$, $m = 2$, $s_1 = 1/4$ and $s_2 = 3/4$;
    \item $f_1:\overline{B}_1\times\R^2\times\R,\quad 
    f_1(x,\bldz,w) := z_1^2(1-z_1)\,w$;
    \item $f_2:\overline{B}_1\times\R^2\times\R,\quad 
    f_2(x,\bldz,w) := z_2\,w$;
    \item $\PP_1:\mathbb{X}\to\R,\quad \PP_1[\bldu] := \int_{B_1}e^{u_2}\,\d x$;
    \item  $\PP_2: \mathbb{X}\to\R,\quad \PP_2[\bldu] := \mathrm{osc}_{B_1}(u_1)$;
    \item  $\BB_1:\mathbb{X}\to\R,\quad \BB_1[\bldu] := u_1(0)u_2(0)$;
    \item  $\BB_2:\mathbb{X}\to\R,\quad \BB_1[\bldu] := \limsup_{|x|\to\infty}u_1(x)$;
    \item  $\zeta_1\equiv\zeta_2\equiv 1$.
   \end{enumerate}
   Moreover, it is straightforward to recognize that
   all the `structural' assumptions (H0)-to-(H4) listed
   at the beginning of Section \ref{sec.existence} are fulfilled.
   We now aim to show that, despite the similarity
   between problems \eqref{eq.PBexmNONexist} and \eqref{eq.PBexmesit}, in this case
   it is possible to choose the parameters $\lambda_1,\lambda_2,\eta_1,\eta_2$ in such a way that
   assumptions (a)-to-(d) of the \emph{non-existence} Theorem \ref{thmnonex} are satisfied.
   \vspace*{0.05cm}
   
   To this end, we consider the (finite) sequence $\varrho$ defined as
   \begin{equation} \label{eq.choicerhoNONexist}
    \varrho := \{\rho_1,\rho_2\},\qquad
   \text{where $\rho_1 = \rho_2 = 1$};
   \end{equation}
   According to this choice of $\varrho$, we have
   (see \eqref{eq.defiIvarrho}-\eqref{eq.defconrho})
   $$I(\varrho) = [0,1]\times[0,1],\qquad
   P(\varrho) = \big\{u\in\mathbb{X}:\,\text{$\bldu(x)\in I(\varrho)$ for
   all $x\in\R^2$}\big\}.$$
       
  \noindent\textbf{Assumption (a).}
  Given any $\bldu = (u_1,u_2)\in P(\varrho)$, we have
  $$\pi\leq\PP_1[\bldu] = \int_{B_1}e^{u_2}\,\d x\leq \pi\cdot e \quad\text{and}
    \quad
    0\leq \PP_2[\bldu] = \mathrm{osc}(u_1) \leq 1;$$
    hence, assumption (a) is fulfilled with the choices
    \begin{equation} \label{eq.choiceomegaNONexist}
   \underline{\omega}_{1,\varrho} := \pi,\quad
   \overline{\omega}_{1,\varrho}
    := \pi\cdot e,\quad 
   \underline{\omega}_{2,\varrho} := 0,\quad \overline{\omega}_{2,\varrho}
    := 1.
    \end{equation}
    
    \noindent\textbf{Assumption (b).} We first observe that,
    clearly,
    both $f_1$ and $f_2$ are continuous on the whole of
    $\overline{B}_1\times\R^2\times\R$;
    mo\-re\-o\-ver, 
    we have (see 
    \eqref{eq.choicerhoNONexist} and \eqref{eq.choiceomegaNONexist})
    \begin{align*}
     & 0 \leq f_1(x,\bldz,w) = z_1^2(1-z_1)w
     \leq (\pi\cdot e)z_1\quad\text{on $\overline{B}_1\times I(\varrho)\times [\pi,\pi\cdot e]$}
     \qquad \\
     & \qquad \text{and}\quad
      0\leq f_2(x,\bldz, w) = z_2w\leq z_2
     \quad \text{on $\overline{B}_1\times I(\varrho)\times [0,1]$}.
    \end{align*}
    Thus, assumption (b) is satisfied with the choices
    \begin{equation} \label{eq.choicetauNon}
    \tau_1 = \pi\cdot e\qquad\text{and}\qquad \tau_2 = 1.
    \end{equation}
    
    \noindent\textbf{Assumption (c).} 
    Given any $\bldu = (u_1,u_2)\in P(\varrho)$, we see that
    \begin{align*}
     & 0\leq \BB_1[\bldu]
     = u_1(0)u_2(0) \leq u_1(0)\leq \|\bldu\|_\infty
     \qquad\text{and} \\
     & \qquad
     0\leq \BB_2[\bldu]
     = \limsup_{|x|\to\infty}u_1(x)\leq \|\bldu\|_\infty.
    \end{align*}
    Hence, assumption (c) is satisfied with the choices
    \begin{equation} \label{eq.choicexiNon}
    \xi_1 = \xi_2 = 1.
    \end{equation}
    
    \noindent\textbf{Assumption (d).} 
    First of all, by exploiting all the computations carried out
    in Example \ref{exm.existence} (see, respectively,
    \eqref{eq.estimGGi} and \eqref{eq.estimgammai}), we know that
    \begin{itemize}
     \item[(a)] $\|\GG^1(\hat{1})\|_\infty = \frac{1}{\sqrt{2}\,\Gamma^2(5/4)}$ and
     $\|\GG^2(\hat{1})\|_\infty = \frac{1}{\sqrt{8}\,\Gamma^2(7/4)}$;\vspace*{0.15cm}
     
     \item[(b)] $\|\gamma_1\|_\infty = \|\gamma_2\|_\infty = 1$;
    \end{itemize}
    As a consequence, by gathering together 
    \eqref{eq.choicetauNon}, \eqref{eq.choicexiNon} and the above (a)-(b),
    we can apply Theorem \ref{thmnonex}:
    for every choice of $\lambda_1,\lambda_2,\eta_1,\eta_2\geq 0$ satisfying
    $$\lambda_1\,\frac{\pi\cdot e}{\sqrt{2}\,\Gamma^2(5/4)}
    +\eta_1 < 1\qquad\text{and}\qquad
    \frac{\lambda_2}{\sqrt{8}\,\Gamma^2(7/4)}+\eta_2 < 1,$$
    the BVP \eqref{eq.PBexmNONexist} possesses only the zero solution
    in $P(\varrho)$ (notice that the constant function 
    $\bldu \equiv 0$ is indeed
    a solution of problem \eqref{eq.PBexmNONexist}).
  \end{example}

\end{document}